\documentclass[11pt]{amsart}

\usepackage{amsmath, amscd}
\usepackage{enumerate}
\usepackage{amsthm}
\usepackage{amssymb}
\usepackage[all]{xy}
\usepackage{verbatim}

\numberwithin{equation}{section}

\newtheorem{thm}{Theorem}[section]

\newtheorem{lem}[thm]{Lemma}

\newtheorem{cor}[thm]{Corollary}

\newtheorem{prop}[thm]{Proposition}


\newtheoremstyle{cond}{}{}{}{}{\bfseries}{.}{ }{#1 #3}
\theoremstyle{cond}

\newtheoremstyle{mystyle}{}{}{}{}{\sffamily\scshape\bfseries}{}{\newline}{\thmnote{#3}}
\theoremstyle{mystyle}

\newtheoremstyle{mynum}{}{}{\itshape}{}{\bfseries}{.}{ }{#1 #3}
\theoremstyle{mynum}

\numberwithin{equation}{section}

\theoremstyle{definition}
\newtheorem{ex}[thm]{Example}

\newcommand{\bZ}{\mathbb{Z}}
\newcommand{\ZZ}{\mathbb{Z}}

\newcommand{\bR}{\mathbb{R}}
\newcommand{\bN}{\mathbb{N}}
\newcommand{\bC}{\mathbb{C}}

\newcommand{\PP}{\mathbb{P}}

\newcommand{\cA}{\mathcal{A}}

\newcommand{\cM}{\mathcal{M}}

\newcommand{\cX}{\mathcal{X}}

\newcommand{\cF}{\mathcal{F}}

\newcommand{\cG}{\mathcal{G}}

\newcommand{\cO}{\mathcal{O}}

\newcommand{\XX}{\mathbb{X}}

\newcommand{\Hom}{{\rm Hom}}

\newcommand{\supp}{{\rm Supp}}

\newcommand{\la}{\langle}
\newcommand{\ra}{\rangle}

\newcommand{\coh}{{\sf coh}}
\newcommand{\Qcoh}{{\sf Qcoh}}
\newcommand{\Db}{{\sf D}^b}

\newcommand{\Zol}{\overline{Z}}
\newcommand{\Phiol}{\overline{\Phi}}

\def\gr{\operatorname {\sf gr}}
\def\Gr{\operatorname {\sf Gr}}
\def\fdim{\operatorname {\sf fdim}}
\def\qgr{\operatorname {\sf qgr}}

\def\QGr{\operatorname {\sf QGr}}

\def\Ext{{\rm Ext}}

\mathchardef\mhyphen="2D

\newcommand{\set}[1]{\left\{#1\right\}}

\newcommand{\ol}[1]{\overline{#1}}

\renewcommand{\l}{\ell}


\begin{document}

\title[Regular algebras of global dimension two]{The Grothendieck group of non-commutative non-noetherian analogues of $\PP^1$ and 
\\
Regular algebras of global dimension two}

\date{\today}

\author{Gautam Sisodia and S. Paul Smith}

\address{ Department of Mathematics, Box 354350, Univ.
Washington, Seattle, WA 98195}

\email{gautas@math.washington.edu, smith@math.washington.edu}

\keywords{Regular algebras, graded rings, global dimension, Grothendieck group}
\subjclass{16W50, 16E65, 19A99}

\begin{abstract}
Let $V$ be a finite-dimensional positively-graded vector space. Let $b \in V \otimes V$ be a homogeneous element
whose rank  is  $\dim(V)$. Let $A=TV/(b)$, the quotient of the tensor algebra $TV$ modulo the 2-sided ideal 
generated by $b$.  Let $\gr(A)$ be the category of finitely presented graded left $A$-modules and $\fdim(A)$ its full subcategory 
of finite dimensional  modules. Let $\qgr(A)$ be the quotient category $\gr(A)/\fdim(A)$. 
We compute the Grothendieck group $K_0(\qgr(A))$. In particular, if the reciprocal of the 
Hilbert series of $A$, which is a polynomial, is irreducible, then $K_0(\qgr(A)) \cong \bZ[\theta] \subset \bR$ as ordered abelian
groups where $\theta$ is the smallest positive real root of that polynomial. 
When $\dim_k(V)=2$, $\qgr(A)$ is equivalent to the category of coherent sheaves on the projective line, $\PP^1$, or a 
stacky $\PP^1$ if $V$ is not concentrated in
degree 1. If $\dim_k(V) \ge 3$, results of Piontkovskii and Minamoto 
suggest that $\qgr(A)$ behaves as if it is the category of ``coherent sheaves'' on a non-commutative, 
non-noetherian, analogue of $\PP^1$. 
\end{abstract}

\maketitle

\section{Introduction}

\subsection{}
Let $k$ be a field. Let $A=TV/(b)$ where $V$ is a  finite-dimensional positively-graded $k$-vector space
and $b \in V \otimes V$ a homogeneous element of rank  $\dim(V)$. Zhang \cite{zh} showed that up to equivalence the category $\gr(A)$ depends only
on $V$ as a graded vector space, not on $b$.

This paper is motivated by non-commutative algebraic geometry. As we explain in \S\ref{sect.ncag}, 
results of Piontkovskii and Minamoto suggest that the category $\qgr(A)$ behaves as if it is the category of 
``coherent sheaves'' on a non-commutative, non-noetherian if $\dim_k(V) \ge 3$, analogue of the projective line, $\PP^1$. 
With this perspective we are computing the Grothendieck groups of ``coherent sheaves'' on these non-commutative analogues 
of $\PP^1$. 

The description of $K_0(\qgr(A))$ as an {\it ordered} abelian group says that 
$\alpha \in \bZ[\theta]\cong K_0(\qgr(A))$ is equal to $[\cF]$ for some $\cF \in \qgr(A)$ if and only if $\alpha \ge 0$. 

The result reminds us of the fact that the $K_0$ of an irrational rotation algebra 
$\cA_\theta$ is isomorphic to $\bZ[\theta] \subseteq \bR_{\ge 0}$ as an ordered abelian group \cite{PV}, \cite{R}.
We do not know any connection between $\qgr(A)$ and $\cA_\theta$.

\subsection{The case $\dim_k(V)=2$} 
\label{ssect.dim=2}
When $V=kx_0 \oplus kx_1$ with $\deg(x_0)=\deg(x_1)=1$ and $b=x_0x_1-x_1x_0$, 
$A$ is the polynomial ring $k[x_0,x_1]$ and  the category $\qgr(A)$ of finitely generated graded $A$-modules modulo the full subcategory of finite dimensional graded $A$-modules  is equivalent to the category
$\coh(\PP^1)$ of coherent sheaves on $\PP^1$. If $\deg(x_0)=1$ and $\deg(x_1)=m>1$ and $b=x_0x_1-x_1x_0$, then $\qgr(A)$
is equivalent to $\coh[\PP^1/\bZ_m]$, the coherent sheaves on the stacky $\PP^1$ with a single stacky point  
isomorphic to $B\bZ_m$. The $K_0$ of this stack, and more general toric DM stacks, is computed in \cite{ps}.

\subsection{}
\label{sect.ncag} 
 In \cite{piont}, Piontkovskii shows that $A=TV/(b)$ behaves like a homogeneous coordinate ring of a non-commutative (non-noetherian if $n \ge 3$) analogue of the projective line. 
He proves that $A$ is graded coherent and hence that the category $\qgr(A)$ of finitely
presented graded $A$-modules modulo the full subcategory of finite dimensional graded $A$-modules is abelian category. 
He shows that $\qgr(A)$ is like $\coh(\PP^1)$ in so far as it has cohomological dimension 1, $\Ext$ groups have finite dimension, and satisfies Serre duality. Explicitly,  if $\cF,\cG \in \qgr(A)$, then $\Ext_{\qgr(A)}^2(\cF,\cG)=0$, $\dim_k\Ext_{\qgr(A)}^*(\cF,\cG) <\infty$,  and $\Hom_{\qgr(A)}(\cF,\cG) \cong \Ext_{\qgr(A)}^1(\cG,\cF(-n-1))^*$.  

In \cite{M}, Minamoto gives additional evidence that $\qgr(A)$ is like $\coh(\PP^1)$ by proving an analogue of the
well-known equivalence, $\Db(\Qcoh(\PP^1_k)) \equiv \Db(kQ_{2})$, of bounded derived categories where $kQ_{2}$ is the 
path algebra of the quiver with two vertices and $2$ arrows from the first vertex to the second. Minamoto shows that
$\Db(\Qcoh(A)) \equiv \Db(kQ_{n+1})$ where $Q_{n+1}$ is the path algebra of the quiver with two vertices and $n+1$ arrows
from the first vertex to the second. 

\subsection{}
The categories $\qgr(A)$ also appear in a paper of Kontsevich and Rosenberg \cite{KR}. 
They  propose a non-commutative analogue of $\PP^n_k$, $N\PP^n_k$, and say that  $\Db(\Qcoh(N\PP^n_k)) \equiv \Db(kQ_{n+1})$. By Minamoto's result, 
$\Qcoh(N\PP^n_k)$ is derived equivalent to $\QGr\big(k\langle x_0,\ldots,x_n\rangle/(x_0^2+\cdots+x_n^2)\big)$. 

\subsection{}
The Grothendieck groups of various {\it noetherian} non-commutative varieties are already known
and have proved useful. For various noetherian analogues $\XX$ of $\PP^n$, 
$K_0(\XX) \cong K_0(\PP^n) \cong \bZ^{n+1}$ \cite{mosmi}. It is shown there that in those cases one can use $K_0(\XX)$
to develop a rudimentary but useful intersection theory for $\XX$. It is shown in 
\cite{MS2} that $K_0$ for certain non-commutative ruled surfaces behaves ``just like'' the commutative case. 
Although the geometry of smooth non-commutative quadric surfaces in non-commutative analogues of $\PP^3$ is very much like 
that  for their commutative counterparts, $K_0$ exhibits an interesting non-commutative phenomenon such as the existence of
simple objects that behave like curves with self-intersection $-2$ (see \cite{SVdB} for details).  

\subsection{Regular algebras}
\label{ssect.regular}
Let $k$ be a field and $A=\bigoplus_{n \ge 0} A_n$ an $\bN$-graded $k$-algebra such that $A_0=k$. The left and right 
global dimensions of $A$ are the same and equal the projective dimension of the $A$-module $k:=A/A_{\ge 1}$. We say
$A$ is  {\sf regular} if it has finite global dimension, $n$ say, and 
$$
\Ext_A^j(k,A) \cong \begin{cases}
k & \text{if $j=n$}
\\
0 & \text{if $j \ne n$.}
\end{cases}
$$
Zhang \cite[Theorem 0.1]{zh}  proved that  $A$  is regular of global dimension 2  if and only if 
it is isomorphic to some 
\begin{equation}
\label{defn.A}
A:=\frac{k\la x_1, \ldots, x_g \ra}{(b)}
\end{equation}
where $g \ge 2$, the $x_i$'s can be labelled so that $\deg(x_i)+\deg(x_{g+1-i})=:d$ is the same for all $i$, and $\sigma$ is a graded $k$-algebra automorphism of the free algebra 
$k \langle x_1,\ldots,x_g \ra$, and  $b=\sum_{i = 1}^g x_i\sigma(x_{g+1-i})$.

\subsection{}
From now on $A$ denotes the algebra in (\ref{defn.A}) where the degrees of the generators and the relation $b$ 
have the properties stated after (\ref{defn.A}). 

\subsection{}
\label{ssect.coherent}
Let $\Gr(A)$ denote the category of graded left $A$-modules and $\gr(A)$ its full subcategory of finitely presented modules. Because $A$ is defined by a single homogeneous relation $\gr(A)$ is an abelian category  \cite[Theorem 1.2]{piont}. 
Let $\fdim(A)$ be the full subcategory of finite 
dimensional graded left $A$-modules and write $\qgr(A)$ for the quotient category $\gr(A)/\fdim(A)$, and $\pi^*: \gr(A) \to \qgr(A)$ for the quotient functor. 

If $M$ is a graded $A$-module, $M(1)$ denotes the graded module that is $M$ as an abelian group with grading $M(1)_n=M_{1+n}$ and the same action of $A$.
Since $\fdim(A)$ is stable under the functor $M \mapsto M(1)$ there is an induced functor $\pi^*M \mapsto \pi^*(M(1)) = : (\pi^*M)(1)$
on the quotient category $\qgr(A)$.

\subsubsection{}
We follow the convention in algebraic geometry where, for a smooth projective variety $X$, 
$K_0(\coh(X))$ denotes the free abelian group generated by coherent $\cO_X$-modules modulo the relations $[\cM_2]=[\cM_1]+[\cM_3]$ for every exact sequence $0 \to \cM_1 \to \cM_2 \to \cM_3 \to 0$. 
In this paper, 
$K_0(\qgr(A))$ denotes the free abelian group generated by the objects in $\qgr(A)$ modulo the 
``same'' relations.
The order structure on $K_0(\qgr A)$ is defined by $K_0(\qgr(A))_{\geq 0} := \set{[M] \ |\ M \in \qgr(A)}$.

 \subsection{}
 Because $A$ is regular of global dimension two the minimal projective resolution of ${}_Ak$ is  
$$
\xymatrix{
0 \ar[r] & A(-d) \ar[r]^-\alpha & \bigoplus_{i = 1}^g A(-\deg(x_i)) \ar[r]^-\beta & A \ar[r] & k \ar[r] & 0
}
$$
where $d=\deg(x_i)+\deg(x_{g+1-i})$, $\alpha$ is right multiplication by $(x_g, \ldots, x_1)$,
 and $\beta$ is right multiplication by $(\sigma(x_1), \sigma(x_2), \ldots,\sigma(x_g))^{\sf T}$. 
 The Hilbert series for $A$ is therefore
 $$
 H_A(t):= \sum_{n=0}^\infty \dim_k(A_n)t^n = \frac{1}{f(t)}  
$$
where
\begin{equation}
\label{defn.f}
f(t):= t^d - \sum_{i = 1}^g t^{\deg(x_i)} + 1.
\end{equation}

\subsection{}
The minimal projective resolution of a module $M \in \gr(A)$ has  the form 
$$
0 \to \bigoplus A(-i)^{e_i} \to \bigoplus A(-i)^{c_i} \to \bigoplus A(-i)^{b_i} \to M \to 0
$$
where the sums are finite. Since $H_{A(-i)}(t) = t^iH_{A}(t)$,  the Hilbert series for $M$, $H_M(t)$, is $H_A(t)q_M(t)$ where 
$$
q_M(t):=\sum_{i \ge 0} (b_i-c_i+e_i)t^i \; \in \; \bZ[t^{\pm 1}].
$$

\subsection{The main result and remarks}
Under the hypotheses in the next theorem, we make $\bZ[t^{\pm 1}]/(f)$ an ordered abelian group by defining 
$$
\left( \frac{\bZ[t, t^{- 1}]}{(f)} \right)_{\geq 0} := \set{\ol{p} \ |\ p(\theta) > 0} \cup \set{0}
$$
where $\ol{p}$ denotes the image of the Laurent polynomial $p$ in $\bZ[t^{\pm 1}]/(f)$.

\begin{thm}
\label{thm.main}
Let $A$ be the algebra in  (\ref{defn.A}) and assume that the greatest common divisor of the 
degrees of its generators $x_i$ is 1. Let $f(t)$ be the polynomial in (\ref{defn.f}) and let $\theta$ be its smallest positive real root.
Let $\bZ[\theta] \subset \bR$ be the $\bZ$-subalgebra generated by $\theta$ viewed as an ordered abelian subgroup  of $\bR$. 
The Grothendieck group $K_0(\qgr(A))$ is isomorphic as an ordered abelian group to
$$
\frac{\bZ[t, t^{- 1}]}{(f)}
$$
via the map $[\pi^* M] \mapsto  \ol{q_M(t)}$. If $f$ is irreducible, $K_0(\qgr(A))$ is isomorphic as an ordered abelian group to $\bZ[\theta]$ via the map $[\pi^* M] \mapsto q_M(\theta)$.

Furthermore, under the isomorphism(s),  the functor $\cM \mapsto \cM(1)$ on $\qgr(A)$ corresponds to multiplication by 
$t^{-1}$ and multiplication by $\theta^{-1}$. 
\end{thm}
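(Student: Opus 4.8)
The plan is to deduce the group isomorphism from the localization sequence in $K$-theory and then to control the order via the non-negativity of Hilbert series. \textbf{First}, I would show $K_0(\gr(A)) \cong \bZ[t^{\pm 1}]$. Since $A$ has finite global dimension, every $M \in \gr(A)$ admits a finite resolution by finitely generated graded projectives, and over the connected graded ring $A$ these are free, i.e. finite sums of shifts $A(-i)$; hence the classes $[A(-i)]$ generate $K_0(\gr(A))$. The assignment $[M] \mapsto q_M(t)$ is additive on short exact sequences because Hilbert series are additive and $q_M(t) = f(t)H_M(t)$ with $f$ fixed, and it sends $[A(-i)] \mapsto t^i$. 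As the $t^i$ form a $\bZ$-basis of $\bZ[t^{\pm 1}]$, the generators map to a basis and the map is an isomorphism $K_0(\gr(A)) \xrightarrow{\sim} \bZ[t^{\pm 1}]$.

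\textbf{Next} I would pass to $\qgr(A)$. Since $\fdim(A)$ is a Serre subcategory of $\gr(A)$ with quotient $\qgr(A)$, there is an exact sequence $K_0(\fdim(A)) \to K_0(\gr(A)) \to K_0(\qgr(A)) \to 0$. The simple objects of $\fdim(A)$ are the shifts $k(i)$ of the trivial module, so $K_0(\fdim(A))$ is generated by the classes $[k(i)]$, and $q_{k(i)}(t) = f(t)H_{k(i)}(t) = t^{-i}f(t)$. Thus the image of $K_0(\fdim(A))$ in $\bZ[t^{\pm 1}]$ is exactly the ideal $(f)$, yielding the group isomorphism $K_0(\qgr(A)) \xrightarrow{\sim} \bZ[t^{\pm 1}]/(f)$ sending $[\pi^* M] \mapsto \ol{q_M(t)}$. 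When $f$ is irreducible it is, up to the unit $t$ (note $f(0)=1$), the minimal polynomial of $\theta$, so evaluation $\ol{p} \mapsto p(\theta)$ identifies $\bZ[t^{\pm 1}]/(f)$ with $\bZ[\theta]$. The statement about the twist is immediate on generators: $(A(-i))(1) = A(-(i-1))$, so $q \mapsto t^{i-1} = t^{-1}t^{i}$, i.e. $\cM \mapsto \cM(1)$ is multiplication by $t^{-1}$, hence by $\theta^{-1}$.

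\textbf{For the order}, the easy inclusion is analytic. For any $M \in \gr(A)$ the series $H_M(t) = \sum_n \dim_k(M_n)t^n$ has non-negative coefficients, and $f>0$ on $[0,\theta)$ because $f(0)=1$ and $\theta$ is the smallest positive root; hence $q_M(t) = f(t)H_M(t) \ge 0$ on $[0,\theta)$, and letting $t \to \theta^-$ gives $q_M(\theta) \ge 0$. So every effective class lands in the non-negative cone. When $f$ is irreducible, $q_M(\theta)=0$ forces $f \mid q_M$, whence $H_M = q_M/f$ is a Laurent polynomial and $M$ is finite dimensional, i.e. $\pi^* M = 0$; thus a nonzero effective class has $q_M(\theta) > 0$.

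\textbf{The hard direction}, and the step I expect to be the main obstacle, is the converse: that every $\ol{p}$ with $p(\theta) > 0$ is effective, i.e. of the form $[\pi^* M]$. The effective classes form a submonoid $E \subseteq \bZ[\theta]_{\ge 0}$ containing every twist value $\theta^{-i}$ and generating $\bZ[\theta]$ as a group, and one must show $E$ exhausts $\bZ[\theta]_{>0}\cup\{0\}$. My plan is to write $\ol p = [P]-[Q]$ with $P,Q$ sums of twists, and then, using that every object is a quotient of twists $\cO(-i)$ with $i \gg 0$ and that $\dim_k\Ext^1_{\qgr(A)}$ is finite (Piontkovskii), to produce a monomorphism $Q \hookrightarrow P$ whose cokernel $M$ is an honest object with $[\pi^* M] = \ol p$; the inequality $q_P(\theta) > q_Q(\theta)$ should guarantee enough sections for such an injection to exist. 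This is exactly where the hypothesis $\gcd(\deg x_i)=1$ enters, ensuring $\theta$ is a simple dominant root so that $\dim_k A_n \sim c\,\theta^{-n}$ and the order is genuinely Archimedean and detected by evaluation at $\theta$. I would treat the clean Archimedean statement (irreducible $f$, where value $0$ forces class $0$) first; for reducible $f$ such as the stacky $\PP^1$ with $f=(t-1)(t^m-1)$, where the roots lie on the unit circle and torsion classes evaluate to $0$ at $\theta$ without vanishing, the effective cone is governed by the finer order coming from the multiplicities of the roots of $f$, and matching this with $\{\,\ol p : p(\theta)>0\,\}\cup\{0\}$ is the delicate remaining point.
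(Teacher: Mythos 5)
Your treatment of the group isomorphism --- the localization sequence, d\'evissage identifying the image of $K_0(\fdim(A))$ with the ideal $(f)$, finite free resolutions giving $K_0(\gr(A)) \cong \bZ[t^{\pm 1}]$, and the twist acting by $t^{-1}$ --- coincides with the paper's argument. For the inclusion ``effective $\Rightarrow$ non-negative at $\theta$'' your argument is actually \emph{simpler} than the paper's: the paper proves $q_M(\theta) \ge 0$ (Proposition \ref{positivity}) via the ratio limits $a_n/a_{n+m} \to \theta^m$, which already require the unique-dominant-root property (Proposition \ref{minimal}, proved by Perron--Frobenius), whereas your pointwise inequality $q_M(t) = f(t)H_M(t) \ge 0$ on $[0,\theta)$ uses nothing but non-negativity of Hilbert coefficients and continuity.

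The genuine gap is the hard direction --- that every $\ol{p}$ with $p(\theta) > 0$ equals $\ol{q_M}$ for some $M \in \gr(A)$ --- which is the substance of the theorem; there you offer a plan rather than a proof, and the plan fails as stated. Take the paper's own example $A = k\la x,y,z\ra/(xz+y^2+zx)$ and $p(t) = -3 + 13t - 4t^2$, so $p(\theta) = 1+\theta > 0$. The decomposition suggested by the coefficients of $p$ is $P = A(-1)^{13}$, $Q = A^{3} \oplus A(-2)^{4}$, and there is \emph{no} injection $Q \hookrightarrow P$: since $\Hom_{\gr(A)}(A, A(-1)) = A_{-1} = 0$, every graded map $Q \to P$ kills the summand $A^{3}$. (Consistently, $p(t)H_A(t)$ has a negative coefficient, so $p$ itself is not the $q_M$ of any module.) The real work is therefore to replace $p$ by a well-chosen representative of $\ol{p}$ modulo $(f)$ that is manifestly effective, and that choice is exactly the paper's Lemma \ref{lem.existsM}: writing $p(t)H_A(t) = \sum b_i t^i$, one shows $b_j/a_j \to p(\theta) > 0$ and $b_j/b_{j+1} \to \theta$ --- this is where the dominant-root property, and hence the hypothesis $\gcd\{d_i\}=1$, genuinely enters --- so the $b_i$ are eventually positive and strictly increasing; then $p(t) - \big(\sum_{i=0}^{m} b_i t^i\big)f(t)$ is supported in the window $[m+1, m+d]$, and its coefficients are realized explicitly as $q_M$ for $M$ a direct sum of shifted free modules $A(-i)$ and shifted cyclic modules $(A/x_1A)(-i)$ or $(A/x_gA)(-i)$, the required multiplicities being non-negative precisely because of the monotonicity of the $b_i$. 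Your appeal to ``enough sections because $q_P(\theta) > q_Q(\theta)$'' is unsubstantiated and, as the example shows, false before such a normalization.

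Two smaller points. First, for reducible $f$ you still need ``$q_M(\theta)=0 \Rightarrow \pi^*M = 0$''; your divisibility argument covers only irreducible $f$, while the paper gets the general case from Pringsheim's theorem (Lemma \ref{fdiml}). Second, your closing speculation that reducible $f$ requires ``a finer order coming from the multiplicities of the roots'' contradicts the statement you are proving: for $g \ge 3$ the theorem asserts that the positive cone of $\bZ[t^{\pm1}]/(f)$ is $\{\ol{p} : p(\theta)>0\} \cup \{0\}$ even when $f$ is reducible --- compare the paper's example $f = (1+t)(1-3t+t^2)$, where the cone maps onto $(\bZ \oplus \bZ[\theta]_{\ge 0}) \cup \{0\}$ with the coordinate $p(-1)$ unconstrained. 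The degenerate case $f = (1-t^{d_1})(1-t^{d_2})$ you worry about occurs only for $g=2$, which the paper's proof explicitly excludes and handles by citation.
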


\subsubsection{}
 \label{sssect.dim=2} 
Suppose $g=2$. Then  $\gr(A)$ is equivalent to $\gr(k[x_0,x_1])$ where $k[x_0,x_1]$ 
is a weighted polynomial ring  with $\deg(x_0)=1$ and  $\deg(x_1)=m$ so $f(t)=(1-t)(1-t^m)$. 
As remarked in  \S\ref{ssect.dim=2}, this case is well understood by \cite{ps} and not so interesting. 

From now on we assume that  $g\ge 3$.

\subsubsection{}
 \label{sect.roots}
Descartes' rule of signs implies that $f(t)$ has either 0 or 2 positive real roots. 
The hypothesis that $g \ge 3$ implies $f(1)<0$. Since $f(0)>0$, we conclude that $f(t)$ has two positive roots, 
$\theta^{-1}>1$ and $\theta \in (0,1)$, say.

\subsubsection{}
In general, $f(t)$ need not be irreducible. For example, if $A=k\langle x,y,z\rangle/(xz+zx+y^2)$ with $\deg(x,y,z)=(5,6,7)$,
then
$$
f(t)=t^{12}-t^7-t^6-t^5+1=(t^2-t+1)(t^{10}+t^9-t^7-t^6-t^5-t^4-t^3+t+1).
$$
There are reducible $f(t)$ of smaller degree but this particular example is interesting because the degree-10 factor is Lehmer's
polynomial \cite{EH}. Lehmer's number is $\theta^{-1} \approx 1.17628$.

\subsection{}
The proof of Theorem \ref{thm.main} uses the next result  with $d_i=\deg(x_i)$. 

\begin{prop}\label{minimal}
Let $d_1,\ldots,d_g$  be non-negative integers. Suppose that $g \ge 3$, that $d_i+d_{g+1-i}=d$ for all $i$, 
and ${\sf gcd}\{d_1,\ldots,d_{g}\}=1$.  
The polynomial
\begin{equation}
\label{defn.f2}
f(t):= t^d - \sum_{i = 1}^g t^{d_i} + 1
\end{equation}
has a unique root of maximal modulus and that root is real.
\end{prop}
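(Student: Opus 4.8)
The plan is to exploit the self-reciprocity of $f$ to transport the question to the \emph{smallest} root, and then to use the non-negativity of the coefficients of $H_A(t)=1/f(t)$ together with the hypothesis $\gcd\{d_1,\dots,d_g\}=1$ to pin down the dominant singularity by a Perron--Pringsheim argument. \emph{Step 1 (reciprocity).} Since $d_i+d_{g+1-i}=d$ for all $i$, a direct computation gives $t^d f(1/t)=f(t)$, so $f$ is self-reciprocal. As $f(0)=1$, no root is $0$, and the root set is stable under $\lambda\mapsto\lambda^{-1}$; this involution reverses the order of moduli and preserves reality. Hence it is enough to show that $f$ has a \emph{unique} root of \emph{minimal} modulus and that it is real: its reciprocal is then the unique root of maximal modulus, and is real. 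As recorded above, the candidate is the real root $\theta\in(0,1)$, whose reciprocal $\theta^{-1}$ will be the maximal-modulus root.

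\emph{Step 2 (positivity and support).} View the $d_i$ as the degrees of the generators of a regular algebra $A$ of global dimension two (such an algebra exists by Zhang's theorem), so that $H_A(t)=1/f(t)=\sum_{n\ge 0}a_n t^n$ with $a_n=\dim_k A_n\ge 0$. The radius of convergence of this series is the least modulus of a root of $f$. By Pringsheim's theorem that radius is itself a positive real pole, hence a positive real root of $f$; since $\theta$ is the least positive real root and is a root, the radius of convergence equals $\theta$ and no root of $f$ has modulus smaller than $\theta$. Finally, because $A$ is a domain, the support $\{n:a_n>0\}$ is a sub-semigroup of $\bN$ containing every $d_i$; as $\gcd\{d_i\}=1$, this numerical semigroup is cofinite, so $a_n>0$ for all sufficiently large $n$.

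\emph{Step 3 (uniqueness on the circle $|t|=\theta$).} The roots of $f$ on $\{|t|=\theta\}$ are exactly the minimal-modulus poles of the rational function $H_A$, and they govern the dominant asymptotics of $a_n$ through the partial-fraction expansion: $a_n\sim n^{m-1}\theta^{-n}\sum_{j}c_j e^{-in\phi_j}$, the sum running over the poles $\theta e^{i\phi_j}$ of maximal order $m$ on this circle. Comparing $|H_A(re^{i\phi})|\le H_A(r)$ for $0\le r<\theta$ shows $\phi=0$ is among the $\phi_j$ and the pole at $\theta$ attains the maximal order, but this alone does not exclude other frequencies. The eventual strict positivity $a_n>0$ from Step 2 (i.e.\ aperiodicity of the support) forces the exponential sum to reduce to the single frequency $\phi=0$; equivalently, this is Perron's theorem for linear recurrences with eventually positive terms. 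Thus $\theta$ is the only root of $f$ on $|t|=\theta$, and combined with Step 2 it is the unique root of minimal modulus; by Step 1, $\theta^{-1}$ is the unique root of maximal modulus and is real.

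The main obstacle is the uniqueness in Step 3 --- ruling out companion roots of modulus $\theta$. The order comparison is not enough on its own, and the decisive input is the aperiodicity supplied by $\gcd\{d_i\}=1$; concretely, the crux is verifying the hypothesis $a_n>0$ for all large $n$, which is exactly where the arithmetic condition on the degrees (together with $A$ being a domain) must be used. Had $\gcd\{d_i\}=e>1$, the support would lie in $e\bN$ and $f$ would acquire $e$ distinct roots of minimal modulus $\theta\zeta$ over the $e$-th roots of unity $\zeta$, so the hypothesis cannot be dropped.
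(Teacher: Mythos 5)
Your Steps 1 and 2 are correct, and both overlap with facts the paper itself uses (the reciprocity $f(t)=t^d f(1/t)$; Zhang's realization of $f$ as the reciprocal of a Hilbert series; Pringsheim's theorem; and the domain property plus $\gcd\{d_1,\dots,d_g\}=1$ giving $a_n>0$ for $n\gg 0$). The fatal gap is in Step 3: there is no ``Perron's theorem for linear recurrences with eventually positive terms.'' Strict positivity of the coefficients of a rational series, even together with the support having gcd $1$, does not force the positive real pole to be the unique pole of minimal modulus. A counterexample:
$$
\sum_{n=0}^\infty \bigl(2+(-1)^n\bigr)t^n \;=\; \frac{2}{1-t}+\frac{1}{1+t}\;=\;\frac{3+t}{1-t^2},
$$
whose coefficients $3,1,3,1,\dots$ are strictly positive with support all of $\bN$ (maximally ``aperiodic'' in your sense), yet which has two simple poles, $1$ and $-1$, of equal minimal modulus. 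Positivity only requires the oscillating terms in the asymptotic expansion of $a_n$ to be dominated by the constant term, not to vanish; so your claim that positivity ``forces the exponential sum to reduce to the single frequency $\phi=0$'' is false. Since ruling out companion roots on the circle $|t|=\theta$ is precisely the content of the proposition, the proof does not go through.

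What is actually needed is positivity at the level of a matrix rather than a sequence: Perron--Frobenius yields a \emph{strictly} dominant eigenvalue only for \emph{primitive} (irreducible and aperiodic) non-negative matrices, and this is how the paper's first proof proceeds. It constructs a finite digraph $G$ from $d_1,\dots,d_g$, proves its adjacency matrix is irreducible (strong connectivity) and aperiodic --- this is exactly where $\gcd\{d_1,\dots,d_g\}=1$ enters, as the gcd of cycle lengths through a vertex --- and then shows by a cycle-counting computation that $p_G(t)=t^{\ell-d}f(t)$ where $\ell=d_1+\cdots+d_g$, so that Perron--Frobenius gives the unique real root of maximal modulus. (Its second proof instead uses trigonometric inequalities, but only for $g\ge 4$.) Your instinct that $\gcd=1$ supplies the decisive aperiodicity is right, but that aperiodicity must be applied to cycle lengths of a strongly connected graph realizing $f$ as (essentially) its characteristic polynomial; applied merely to the support of $(a_n)$, as in your Step 3, it has no such consequence.
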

 
We give two proofs of this result. That in \S\ref{proof1} uses a directed graph associated to $d_1,\ldots,d_g$
and the Perron-Frobenius theorem. That in  \S\ref{proof2} uses 
elementary trigonometric identities, but only works when $g \ge 4$.

\section{Proof of Theorem \ref{thm.main} modulo Proposition \ref{minimal}}

\begin{lem}\label{converge}
Let $\sum_{n=0}^\infty c_n z^n$ be a  power series in which $c_n>0$ for all $n \gg 0$. Suppose 
\begin{enumerate}
  \item 
 $\sum_{n=0}^\infty c_n z^n$ has radius of convergence $R>0$ and on the disk $|z|<R$ it converges to a rational function 
  $s(z)$ that has a simple pole at $z=R$;
  \item 
  all other poles of $\,\sum_{n=0}^\infty c_n z^n$ have modulus $>R$.
\end{enumerate}
Then
$$
\lim_{n\to \infty} \frac{c_n}{c_{n+1}} =R.
$$
\end{lem}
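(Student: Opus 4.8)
The plan is to determine the asymptotics of the coefficients $c_n$ from the partial fraction decomposition of the rational function $s(z)$, isolating the contribution of the simple pole at $z=R$. Writing $s=p/q$ in lowest terms, the simple pole at $z=R$ contributes a single term $\alpha/(z-R)$ with $\alpha\neq 0$, and by hypothesis (2) every other pole lies strictly outside the closed disk of radius $R$. Thus
$$
s(z) = \frac{\alpha}{z-R} + h(z),
$$
where $h(z)$ gathers the polynomial part of $s$ together with the principal parts at all the remaining poles, and is therefore holomorphic on some open disk $|z|<R'$ with $R'>R$ (choose $R'$ below the modulus of the nearest remaining pole).

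First I would read off the Taylor coefficient of the dominant term. Expanding $\alpha/(z-R) = -(\alpha/R)\sum_{n\ge 0} R^{-n} z^n$ shows that its $n$-th coefficient equals $-\alpha R^{-n-1}$. Since $h$ is holomorphic on $|z|<R'$, its coefficients $h_n$ satisfy $|h_n|\le C\rho^{-n}$ for any fixed $\rho$ with $R<\rho<R'$, so $h_n = O(\rho^{-n})$. Because the $z^n$-coefficients of both sides agree,
$$
c_n = -\alpha R^{-n-1} + h_n = -\alpha R^{-n-1}\bigl(1 + o(1)\bigr),
$$
the error being $O\bigl((R/\rho)^n\bigr)\to 0$. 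The hypothesis $c_n>0$ for $n\gg 0$ then forces $-\alpha>0$; setting $\beta=-\alpha>0$ gives $c_n=\beta R^{-n-1}(1+o(1))$.

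Finally I would compute the ratio directly:
$$
\frac{c_n}{c_{n+1}} = \frac{\beta R^{-n-1} + h_n}{\beta R^{-n-2} + h_{n+1}} = R\cdot\frac{1 + h_n/(\beta R^{-n-1})}{1 + h_{n+1}/(\beta R^{-n-2})},
$$
and since both correction factors tend to $1$, the limit is $R$. The main obstacle is purely one of bookkeeping with the estimates: one must use hypothesis (2) to guarantee that $h$ extends holomorphically to a disk strictly larger than $|z|<R$ (in particular that there is no other pole on the circle $|z|=R$), and one must check that a possible higher-order pole at some $w$ with $|w|>R$ contributes coefficients of size $O\bigl(n^{k-1}|w|^{-n}\bigr)=o(R^{-n})$, so that it is absorbed into the error term. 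Beyond these routine decay bounds no genuine analytic difficulty arises.
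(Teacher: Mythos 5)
Your proof is correct and follows essentially the same route as the paper's: both split $s(z)$ into the principal part at the simple pole $z=R$ plus a remainder holomorphic on a disk of radius strictly larger than $R$, read off $c_n = \beta R^{-n-1} + (\text{error})$, and take the ratio. The only cosmetic differences are the sign convention ($\alpha/(z-R)$ versus $\alpha/(R-z)$) and that you quantify the error via Cauchy estimates $|h_n| \le C\rho^{-n}$, where the paper simply notes that the remainder's Taylor series has radius of convergence exceeding $R$, so $b_n R^n \to 0$.
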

\begin{proof}
There are polynomials $p(z)$ and $q(z)$, neither divisible by $R-z$, such that
$$
s(z)=\frac{p(z)}{(R-z)q(z)} =\frac{\alpha}{R-z} +\frac{r(z)}{q(z)}
$$
where $\alpha \in \bC^\times$, $r(z)$ is a polynomial, and $r(z)/q(z)$ has a Taylor series expansion $\sum_{n=0}^\infty  b_n z^n$ with radius of convergence $>R$ by (2). Since $\sum_{n=0}^\infty b_nR^n$ converges, $\lim_{n \to \infty}b_nR^n =0$.

Since 
$$
s(z)=\frac{\alpha}{R}\sum_{n=0}^\infty \frac{z^n}{R^n} +\sum_{n=0}^\infty b_n z^n
$$
for $|z|<R$, 
$$
c_n=\frac{\alpha}{R^{n+1}} + b_n.
$$
Therefore
$$
\lim_{n\to \infty} \Bigg( \frac{c_n}{c_{n+1}} \Bigg) \; = \; \lim_{n\to \infty} \Bigg(  \frac{\alpha R +b_nR^{n+2}}{\alpha +b_{n+1}R^{n+2}} \Bigg) 
\; = \; \frac{\alpha R}{\alpha} \; = \;R,
$$
as claimed.
\end{proof}

\subsection{Notation}

Let $A$ be the algebra in (\ref{defn.A}). Let $d_i:=\deg (x_i)$ and $d: = \deg(b)$. 
 We assume that  $g \ge 3$, that ${\sf gcd}\{d_1, \ldots, d_g\} = 1$, and $d=d_i + d_{g + 1 - i}$ for all $i$. 
 
We suppose $d_1 \leq d_2 \leq \cdots \leq d_g$ for simplicity. We write $n_i$ for the number of $x_j$'s whose degree is $i$. Thus  
$$
f(t) = t^d - \sum_{i = 1}^{d-1} n_i t^i + 1.
$$

Because $A$ is a domain  \cite[Thm. 0.2]{zh} and 1 is the greatest common divisor of the degrees of its generators, $A_n \ne 0$ for all  $n \gg 0$.

We write  $a_n := \dim_k(A_n)$.

\begin{lem}\label{converge2}
For all $m \geq 1$,
$$
\lim_{n \to \infty} \frac{a_n}{a_{n+m}} = \theta^m.
$$
\end{lem}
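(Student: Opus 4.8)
The plan is to deduce Lemma \ref{converge2} from Lemma \ref{converge} by verifying that the Hilbert series $H_A(t) = \sum_{n \ge 0} a_n t^n = 1/f(t)$ satisfies the hypotheses of that lemma, with radius of convergence $R = \theta$. First I would recall that the poles of $H_A(t)$ are exactly the roots of $f(t)$, so the radius of convergence equals the modulus of the smallest root. By \S\ref{sect.roots}, $f$ has two positive real roots $\theta \in (0,1)$ and $\theta^{-1} > 1$, and $\theta$ is the smaller. The crucial input is Proposition \ref{minimal}: it guarantees that $\theta$ is the unique root of $f$ of \emph{minimal} modulus (equivalently, $\theta^{-1}$ is the unique root of maximal modulus of the reciprocal polynomial $t^d f(1/t)$, which is how Proposition \ref{minimal} is phrased). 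This uniqueness is precisely what gives hypothesis (2) of Lemma \ref{converge}, namely that all other poles have modulus strictly greater than $R = \theta$.

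Next I would check hypothesis (1). That $R > 0$ is clear since $f(0) = 1 \ne 0$. Since $\theta$ is a root of $f$ and is simple (a double root at $\theta$ would force $f$ and $f'$ to share it, which one rules out because the two positive roots $\theta, \theta^{-1}$ are distinct and Descartes' rule already caps the count at two positive roots), the rational function $1/f(t)$ has a simple pole at $z = \theta$. I would also note the positivity condition $c_n = a_n > 0$ for $n \gg 0$, which is recorded in the Notation subsection: since $A$ is a domain and the degrees of the generators have gcd $1$, we have $A_n \ne 0$ for all large $n$.

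With the hypotheses in place, Lemma \ref{converge} yields $\lim_{n \to \infty} a_n / a_{n+1} = \theta$ directly (the case $m = 1$). For general $m \ge 1$ I would write the ratio as a telescoping product
$$
\frac{a_n}{a_{n+m}} = \prod_{j=0}^{m-1} \frac{a_{n+j}}{a_{n+j+1}},
$$
and take the limit factor by factor: each factor tends to $\theta$ as $n \to \infty$, so the product of the $m$ factors tends to $\theta^m$. This handles all $m$ at once and avoids any separate induction.

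The only genuine subtlety—and the step I would flag as the main obstacle—is the verification that $\theta$ is the \emph{strict} minimum-modulus root, i.e. that no other complex root of $f$ has modulus equal to $\theta$. This is not automatic from Descartes' rule, which only controls \emph{real} roots; a priori there could be complex-conjugate roots sharing the modulus $\theta$, which would invalidate hypothesis (2) and break the limit (one would only get Abel/Cesàro-type averaged convergence). This is exactly the content that Proposition \ref{minimal} supplies, so the proof of Lemma \ref{converge2} is essentially an application of that proposition together with the simple-pole bookkeeping; the real work is deferred to the two proofs of Proposition \ref{minimal} promised in \S\ref{proof1} and \S\ref{proof2}.
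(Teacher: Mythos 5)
Your proposal is correct and follows essentially the same route as the paper: reduce to $m=1$ via the telescoping product and apply Lemma \ref{converge} with $R=\theta$, where hypothesis (2) rests on Proposition \ref{minimal} together with the self-reciprocity $f(t)=t^df(t^{-1})$. The paper simply asserts that $H_A(t)$ satisfies the conditions of Lemma \ref{converge}, whereas you spell out the verification (simplicity of the root $\theta$ via Descartes' rule, positivity of $a_n$ for $n\gg 0$, uniqueness of the minimal-modulus root); this is exactly the bookkeeping the paper leaves implicit.
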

\begin{proof}
 Since 
$$
\frac{a_n}{a_{n+m}} = \frac{a_n}{a_{n+1}}\frac{a_{n+1}}{a_{n+2}} \cdots \frac{a_{n + m -1}}{a_{n+m}}
$$
for $n \gg 0$, it suffices to prove the result for $m = 1$.
Since $H_A(t) = \sum_{i = 0}^\infty a_i t^i$
satisfies the conditions of Lemma \ref{converge} for $R = \theta$, the result follows from the conclusion of Lemma \ref{converge}.
\end{proof}

\begin{prop}\label{positivity}
If $M \in \gr(A)$, then $q_M(\theta) \geq 0$.
\end{prop}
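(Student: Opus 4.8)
The plan is to translate the statement into a question about Hilbert series and then extract positivity from the asymptotic ratios supplied by Lemma \ref{converge2}. Since $H_M(t) = H_A(t)\,q_M(t)$ and $H_A(t) = 1/f(t)$, the Laurent polynomial $q_M$ records precisely the relation $H_M = q_M \cdot H_A$ at the level of coefficients. Writing $q_M(t) = \sum_i q_i t^i$ (a finite sum), $a_n := \dim_k A_n$, and $m_n := \dim_k M_n$, comparison of the coefficient of $t^n$ on both sides of $H_M = q_M H_A$ gives
$$
m_n = \sum_i q_i\, a_{n-i}
$$
for every $n$.

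Next I would divide through by $a_n$, which is legitimate because $A$ is a domain with $\gcd$ of generator degrees equal to $1$, so $a_n > 0$ for $n \gg 0$. This yields
$$
\frac{m_n}{a_n} = \sum_i q_i\, \frac{a_{n-i}}{a_n}.
$$
The sum is finite, so I may pass to the limit termwise. Lemma \ref{converge2} gives $\lim_{n\to\infty} a_{n-i}/a_n = \theta^i$ for each fixed $i$: for $i>0$ apply the lemma with $n$ replaced by $n-i$ and $m=i$, and for $i<0$ take reciprocals and apply it with $m=-i$. Therefore
$$
\lim_{n\to\infty} \frac{m_n}{a_n} = \sum_i q_i\, \theta^i = q_M(\theta).
$$

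Finally, since $m_n = \dim_k M_n \ge 0$ and $a_n > 0$ for all $n \gg 0$, every term $m_n/a_n$ in the sequence is non-negative, and hence so is its limit $q_M(\theta)$. The one place demanding care is the two-sided shift in $a_{n-i}/a_n$: because $q_M$ is a genuine Laurent polynomial the index $i$ can be negative, and I must confirm that Lemma \ref{converge2} applies in that direction as well, using that $a_n$ is eventually strictly positive so the relevant reciprocals are defined. No single step is a serious obstacle; the entire content of the argument is concentrated in the ratio asymptotics of Lemma \ref{converge2}, and once those are in hand the positivity of $q_M(\theta)$ is immediate.
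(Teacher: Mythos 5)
Your proposal is correct and is essentially the paper's own proof: the same coefficient comparison $\dim_k M_n = \sum_i q_i a_{n-i}$, division by $a_n$, termwise passage to the limit via Lemma \ref{converge2}, and the observation that a limit of non-negative ratios is non-negative. Your explicit treatment of negative exponents (taking reciprocals in Lemma \ref{converge2}) is a detail the paper leaves implicit, but it is the same argument.
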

\begin{proof}
Write $q_M(t) = \sum_{i = -s}^s p_i t^i$ and define $e_i := \sum_{j = -s}^s p_j a_{i-j}$.
Then 
$$
H_M(t) \; = \; q_M(t)H_A(t)= \left( \sum_{i = -s}^s p_i t^i \right) \left( \sum_{i = 0}^\infty a_i t^i \right) \; = \; \sum_{i = -s}^\infty e_i t^i. 
$$
As $m \to \infty$,  
$$
\frac{e_m}{a_m} \; = \;  \sum_{j = -s}^s \left(\frac{a_{m-j}}{a_m}\right)p_j \;  \longrightarrow  \;
\sum_{j = -s}^s p_j \theta^j  \; = \; q_M(\theta).
$$
 Since  $e_i=\dim(M_i)$, $\set{e_m / a_m}_{m \gg 0}$ is a sequence of non-negative numbers its limit, $q_M(\theta)$, is $\geq 0$.
\end{proof}

\begin{lem}
\label{fdiml}
Let $M \in \gr(A)$. The following are equivalent:
\begin{enumerate}
\item $M \in \fdim(A)$;
\item $f(t)$ divides $q_M(t)$;
\item $q_M(\theta) = 0$.
\end{enumerate}
\end{lem}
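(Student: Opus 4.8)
The plan is to prove the cycle of implications $(1)\Rightarrow(2)\Rightarrow(3)\Rightarrow(1)$, the first two being formal and the third carrying all of the weight. The unifying identity is $q_M(t)=f(t)H_M(t)$, which holds because $H_M(t)=H_A(t)q_M(t)=q_M(t)/f(t)$. Granting this, $(1)\Rightarrow(2)$ is immediate: if $M$ is finite dimensional then $H_M(t)$ is a Laurent polynomial in $\bZ[t^{\pm 1}]$, so $q_M=f\cdot H_M$ exhibits $f$ as a factor of $q_M$. For $(2)\Rightarrow(3)$, write $q_M=f\cdot h$ with $h\in\bZ[t^{\pm 1}]$ and evaluate at $t=\theta$ (legitimate since $\theta\neq 0$) to get $q_M(\theta)=f(\theta)h(\theta)=0$, using $f(\theta)=0$.

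The real work is $(3)\Rightarrow(1)$, and my idea is to exploit the location of the roots of $f$ together with the non-negativity of the coefficients of $H_M$. I would regard $H_M(t)=q_M(t)/f(t)$ as a rational function whose power-series coefficients are the non-negative integers $\dim_k(M_n)$, and argue that $q_M(\theta)=0$ forces this series to terminate. First I would record the relevant root data: by the palindromic symmetry $f(t)=t^df(1/t)$ (a consequence of $d_i+d_{g+1-i}=d$) together with Proposition \ref{minimal}, the reciprocal $\theta^{-1}$ is the unique root of maximal modulus and is simple, being the Perron--Frobenius root; hence $\theta$ is the unique root of minimal modulus and is also simple. By \S\ref{sect.roots} the only positive real roots of $f$ are $\theta$ and $\theta^{-1}$. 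Since $\theta$ is simple and $q_M(\theta)=0$, the pole of $q_M/f$ at $t=\theta$ is removable, and as every other root of $f$ has modulus strictly larger than $\theta$, the radius of convergence $R$ of $H_M$ satisfies $R>\theta$.

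The main obstacle is that $R>\theta$ by itself is too weak: a non-terminating power series could in principle have radius of convergence in $(\theta,1)$, so I must upgrade $R>\theta$ to genuine finite dimensionality. Here I would invoke Pringsheim's theorem: because $H_M$ has non-negative coefficients, if it is not a polynomial then its radius of convergence $R$ is itself a singular point, hence a positive real root of $f$ not cancelled by $q_M$. The only positive real roots are $\theta$ (already removed) and $\theta^{-1}$, so necessarily $R=\theta^{-1}>1$. But then the series $\sum_n \dim_k(M_n)$ converges (evaluate at $t=1<R$), which is impossible for a sequence of non-negative integers that is not eventually zero. Therefore $H_M$ must be a polynomial, i.e. $\dim_k(M_n)=0$ for $n\gg 0$, and $M\in\fdim(A)$. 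The decisive inputs are thus the rationality of $H_M$, Pringsheim's positivity principle, and the fact --- guaranteed by Proposition \ref{minimal} and Descartes' rule --- that $f$ has no positive real root strictly between $\theta$ and $\theta^{-1}$.
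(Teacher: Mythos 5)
Your proof is correct and takes essentially the same approach as the paper: the implications (1)$\Rightarrow$(2)$\Rightarrow$(3) are identical, and for (3)$\Rightarrow$(1) both arguments cancel the simple pole of $H_A(t)$ at $t=\theta$ against the factor $q_M(t)$ and then apply Pringsheim's theorem to the power series $H_M(t)$ with non-negative coefficients, using that $\theta$ and $\theta^{-1}$ are the only positive real roots of $f$. The differences are cosmetic: the paper gets its contradiction directly from $R\le 1$ (no poles of $H_M$ in $[0,1]$), whereas you pass through the case $R=\theta^{-1}$ and rule it out at $t=1$; also, your appeal to Proposition \ref{minimal} for the simplicity and minimal modulus of $\theta$ is harmless but unnecessary, since Descartes' rule (\S\ref{sect.roots}) and Pringsheim already supply what is needed.
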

\begin{proof}
$(1) \Rightarrow (2)$ If $\dim_k(M)<\infty$, then $H_M(t)\in \bN[t,t^{-1}]$ so $q_M(t)$ is a multiple of $f(t)$.

$(2) \Rightarrow (3)$ If $f(t)$ divides $q_M(t)$ then $q_M(\theta) = 0$ since $\theta$ is a root of $f(t)$.

$(3) \Rightarrow (1)$ Suppose $q_M(\theta) = 0$ but $\dim_k(M)=\infty$. The power series $H_M(t)$ has non-negative coefficients and a finite radius of convergence $R \leq 1$. Since $H_M(t)= q_M(t)H_A(t)$, $q_M(\theta) = 0$ and $\theta$ is a simple pole of $H_A(t)$ and the only pole of $H_A(t)$ in the interval $[0,1]$, $H_M(t)$ has no poles in the interval $[0,1]$. 
This contradicts Pringsheim's Theorem \cite[Theorem IV.6]{ancomb} which says that $H_M(t)$ has a pole at $t = R$. 
We therefore conclude that $\dim_k(M)<\infty$.
\end{proof}

\subsection{} 
The converse of Proposition \ref{positivity} is not true: there are Laurent polynomials $p \in \bZ[t, t^{-1}]$ such that $p(\theta) \geq 0$ but $p(t)$ is not equal to $q_M(t)$ for any $M \in \gr(A)$. The following example illustrates this fact.

\begin{ex}
Let $A = k\la x, y, z\ra/(xz + y^2 + zx)$ with $\deg(x) = \deg(y) = \deg(z) = 1$. In this case, $f(t) = 1 - 3t + t^2$ and $\theta = \frac{1}{2}(3 - \sqrt{5})$.

Let $p(t) = -3 + 13t - 4t^2$. Then $p(\theta) = 1 + \theta > 0$; but
$$
p(t)H_A(t) = -3 + 4t^2 + 11t^2 + \cdots
$$
has a negative coefficient so  is not the Hilbert series of any module.

However, if $M = A \oplus A(-1)$, then $q_M(t) = 1 + t$ and $q_M(t) - p(t) = 4 - 12t + 4t^2 = 4f(t)$, so
$$
\big(q_M(t) - p(t)\big)H_A(t) = 4.
$$
In other words, $H_M(t)$ and $p(t)H_A(t)$ differ only in the the first term. The following lemma is a generalization of this fact.
\end{ex}

\begin{lem}
\label{lem.existsM}
Let $p \in \bZ[t,t^{-1}]$. If $p(\theta) > 0$, then there is an $M$ in  $\gr(A)$ such that $q_M(t) - p(t) \in (f)$.
\end{lem}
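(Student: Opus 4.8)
The plan is to realize $p$, up to a multiple of $f$, as the $q$-polynomial of an explicit module obtained as a cokernel of a map between free modules.

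First I would reduce to the case that $p$ is an honest polynomial. If $p=t^{-a}\tilde p$ with $\tilde p\in\bZ[t]$ and $a\ge 0$, then $\tilde p(\theta)=\theta^a p(\theta)>0$; and if $M$ satisfies $q_M-\tilde p\in(f)$, then, since $q_{M(a)}(t)=t^{-a}q_M(t)$, its twist $M(a)$ satisfies $q_{M(a)}-p=t^{-a}(q_M-\tilde p)\in(f)$. So assume $p\in\bZ[t]$. Now write $p(t)H_A(t)=\sum_{m\ge 0}e_m t^m$ and observe that, exactly as in the proof of Proposition \ref{positivity} (via Lemma \ref{converge2}), $e_m/a_m\to p(\theta)>0$, so there is an $N_0$ with $e_m>0$ for all $m\ge N_0$. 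Set $p'(t):=p(t)-f(t)\sum_{m<N_0}e_m t^m$. Then $p'\equiv p\pmod f$ and $p'(t)H_A(t)=\sum_{m\ge N_0}e_m t^m=:w(t)$ has non-negative coefficients. It therefore suffices to produce $M\in\gr(A)$ with $H_M(t)=w(t)$, since then $q_M(t)=f(t)w(t)=p'(t)\equiv p(t)\pmod f$.

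To build such an $M$, split $p'=(p')^{+}-(p')^{-}$ into its non-negative and non-positive parts and set $F_0:=\bigoplus_j A(-j)^{(p')^{+}_j}$ and $K:=\bigoplus_j A(-j)^{(p')^{-}_j}$, both finitely generated graded free modules. By construction $H_{F_0}-H_K=p'H_A=w\ge 0$ coefficientwise. Granting a graded injection $\psi\colon K\hookrightarrow F_0$, the short exact sequence $0\to K\to F_0\to \Coker\psi\to 0$ shows that $M:=\Coker\psi$ is finitely presented, lies in $\gr(A)$, and has $H_M=H_{F_0}-H_K=w$; hence $q_M=f\,w=p'\equiv p\pmod f$, as required.

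I expect the existence of $\psi$ to be the main obstacle, and the only place where the fine structure of $A$—rather than formal manipulation of Hilbert series—enters. The inequality $H_{F_0}\ge H_K$ we have arranged is clearly necessary, and because $g\ge 3$ forces $\theta<1$ it already encodes substantial room: comparing the two sides on $[0,\theta)$, where $H_A>0$, gives $p'(t)\ge 0$ there and hence $(p')^{+}(\theta)\ge(p')^{-}(\theta)$, which is just $p(\theta)\ge 0$. This is in sharp contrast with the Ore/commutative situation $\theta=1$, where an integer rank bound would instead be forced. Concretely I would match the generators of $K$ to right-linearly independent elements of $F_0$: since $A$ is a domain with $A_0=k$ and $A_n\ne 0$ for all $n\gg 0$, and since $A$ is ``close to free'' for $g\ge 3$ (it contains free subalgebras, so that $A^{r}\hookrightarrow A^{s}$ can be injective even when $r>s$), the domination $H_{F_0}\ge H_K$ should permit choosing such images degree by degree—either by a dimension count over an infinite field or by an explicit word-construction mimicking the free algebra. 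Making this injectivity precise is the crux; the remaining steps above are purely formal.
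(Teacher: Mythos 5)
Your formal reductions are sound and in fact track the paper's opening moves: the shift to $p\in\bZ[t]$, and the replacement of $p$ by $p'=p-f\cdot(\text{partial sum of }pH_A)$ so that $p'H_A$ has non-negative coefficients, run parallel to the paper's passage from $p$ to its residual $r(t)=p(t)-\left(\sum_{i=0}^m b_it^i\right)f(t)$. But your proof stops exactly where the lemma's content begins: you never produce the graded injection $\psi\colon K\hookrightarrow F_0$, and you concede this is the crux. This cannot be outsourced to a genericity or dimension-count argument, because the principle you implicitly invoke --- that coefficientwise domination $H_{F_0}\ge H_K$ between graded free modules guarantees a graded injection $K\hookrightarrow F_0$ --- is \emph{false} for these algebras. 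Take all $d_i=1$, so $f(t)=1-gt+t^2$, and consider $K=A(-1)^g$, $F_0=A\oplus A(-2)$. Then $H_{F_0}-H_K=f(t)H_A(t)=1\ge 0$, yet no graded injection exists: since $\Hom_{\gr(A)}(A(-1),A(-2))=A_{-1}=0$, any graded map $K\to F_0$ factors through $A$, i.e.\ has the form $(a_1,\ldots,a_g)\mapsto\sum a_iu_i$ with $u_i\in A_1$; if the $u_i$ are $k$-linearly dependent the kernel is visibly nonzero, and if they form a basis of $V$ then, because the defining tensor $b$ has rank $g$, one can write $b=\sum_i w_i\otimes u_i$ with all $w_i\ne 0$, so $(w_1,\ldots,w_g)$ lies in the kernel. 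Thus injectivity of maps between free modules is obstructed by the rank of $b$, and any correct argument must use finer information about your particular $p'$ than the inequality $H_{F_0}\ge H_K$ --- information you have not extracted. (This pair $(K,F_0)$ does not arise verbatim from your procedure, but it destroys the only general mechanism you offer for producing $\psi$.)

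This is precisely where the paper does its real work, and its solution can be read inside your framework. Instead of truncating using only eventual positivity of the coefficients of $pH_A$, the paper uses their eventual \emph{strict monotonicity} ($0<b_{m-d}<b_{m+1-d}<\cdots$, available because $b_j/b_{j+1}\to\theta<1$), which forces the residual $r$ to be supported in the window $[m+1,m+d]$ with each negative coefficient $r_{i+d_g}=-b_{i-d_1}$ sitting exactly $d_g$ degrees above a dominating positive coefficient $r_i\ge b_{i-d_1}$. The required injection is then explicit --- componentwise right multiplication by $x_g$, sending $A(-i-d_g)^{b_{i-d_1}}$ into $A(-i)^{b_{i-d_1}}$ --- and its injectivity needs only that $A$ is a domain; equivalently, the paper realizes the negative part by the cyclic modules $(A/x_gA)(-i)$, whose $q$-polynomial is $t^i(1-t^{d_g})$. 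To repair your argument you would have to either (i) prove an embedding theorem for graded free modules over these coherent algebras strong enough to beat the rank-of-$b$ obstruction, which is a substantial undertaking and false in the generality you assert, or (ii) choose the representative $p'$ with the paper's extra care so that an explicit injection is available.
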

\begin{proof}
It suffices to show that $t^sq_M(t)-t^sp(t) \in (f)$ for some integer $s$. 
Since $q_{M(-s)}(t)=t^sq_M(t)$, we can, and will, assume $p(t) \in \bZ[t]$. 

Write $p(t) = \sum_{i = 0}^s p_i t^i$. Define integers $b_i$, $i \ge 0$, by the requirement that
\begin{equation}
\label{defn.bi}
\sum_{i=0}^\infty b_it^i : = p(t) H_A(t).
\end{equation}
Therefore
$$
p(t)=f(t) \sum_{i=0}^\infty b_it^i  = \Bigg( \sum_{i=0}^\infty b_it^i\Bigg) \Bigg(1-\sum_{j=1}^{d-1} n_j t^j +t^d\Bigg)  .
$$
Equating coefficients gives 
\begin{equation}
\label{eq.coeffs}
p_i=b_i+b_{i-d}-\sum_{j=1}^{d-1}n_j b_{i-j}
\end{equation}
for all $i \ge 0$ with the convention that $p_i=0$ for $i>s$ and $b_i=0$ for $i<0$. 

Since $a_j \ne 0$ for  $j \gg 0$,  
$$
\lim_{j\to \infty} \bigg( \frac{b_j}{a_j} \bigg) \; = \;  \lim_{j\to \infty} \Bigg(  \sum_{i=0}^s \left(\frac{a_{j-i}}{a_j}\right)p_i \Bigg) \; =\; \sum_{i = 0}^sp_i\theta^i \; = \; p(\theta) \; > \; 0.
$$
Therefore
$$
\lim_{j \to \infty} \bigg(  \frac{b_j}{b_{j+1}} \bigg) \;=\;  \lim_{j \to \infty} \Bigg(  \frac{b_j}{a_j}  \frac{a_{j+1}}{b_{j+1}} \frac{a_j}{a_{j+1}}\Bigg) \; = \;  p(\theta) p(\theta)^{-1} \theta  \; = \;  \theta.
$$

It follows that there is an integer $m \ge s$ such that 
$$
0<b_{m-d}<b_{m+1-d}< \cdots < b_i < b_{i+1}< \cdots.
$$
 We fix such an $m$. 

Define the integers $r_i$ by
$$
\sum_i r_it^i := p(t) - \left(\sum_{i = 0}^m b_it^i\right)f(t). 
$$
We have
\begin{align*}
\sum r_it^i & \; = \;   p(t) - \left(\sum_{i = 0}^m b_it^i\right)\left(1 - \sum_{j = 1}^{d-1} n_j t^j + t^d\right)
\\
& \; = \;  
p(t) - \sum_{i = 0}^m \left[b_i - \sum_{j = 1}^{d-1} n_j b_{i-j}+ b_{i-d}\right]t^i  + \sum_{i = m+1}^{m+d}\left[\sum_{j = i-m}^{d-1} 
n_j b_{i-j} - b_{i-d}\right]t^i
\\
& \; = \;  
 \sum_{i = m+1}^{m+d}\left[\sum_{j = i-m}^{d-1} n_j b_{i-j} - b_{i-d}\right]t^i \qquad \hbox{by (\ref{eq.coeffs})}.
\end{align*}
Thus,  
\begin{equation}
\label{defn.ri}
r_i = \sum_{j = i - m}^{d-1} n_j b_{i-j} - b_{i-d}
\end{equation}
for $m+1 \le i \le m+d$ and $r_i$ is 0 if $i \notin [m+1,m+d]$. 

Suppose $d=2$, i.e., all generators have degree 1. Then  
\begin{align*}
\sum r_i t^i & \; = \; r_{m+1}t^{m+1}+r_{m+2}t^{m+2} 
\\
&\; = \;  (n_1b_m-b_{m-1})t^{m+1}+(-b_m)t^{m+2}
\\
&\; = \;  ((n_1-1)b_m-b_{m-1})t^{m+1}+b_m(1-t)t^{m+1}
\end{align*}
which equals $q_M(t)$ if 
$$
M= A(-m-1)^{(n_1-1)b_m-b_{m-1}} \oplus \Bigg(\frac{A}{x_1A}\Bigg)(-m-1)^{ b_m}.
$$
Note that there really is such a module because $(n_1-1)b_m \ge b_m > b_{m-1}>0$.

From now on we assume that $d \ge 3$. Thus $d_1\ne d_g$ and $d_1+d_g=d$.

We will prove the result by showing there are modules $L,N \in \gr(A)$ such that
\begin{equation}
\label{qN}
 \sum_{i=m+d_1+1}^{m+d_g} r_i t^i \;=\; q_N(t)
\end{equation}
 and 
\begin{equation}
\label{qL}
 \sum_{i = m+1}^{m+d_1}r_it^i  \; + \; \sum_{i = m+d_g+1}^{m+d}r_it^i \; = \; q_L(t).
\end{equation}

First we prove (\ref{qN})  by showing that $r_i \ge 0$ when $m+d_1+1 \le i \le m+d_g$. For such an $i$,
$i-m \le d_g$ so the term $n_{d_g}b_{i-d_g}$ appears in the expression $r_i=\sum_{j=i-m}n_jb_{i-j}-b_{i-d}$;
that is sufficient to imply that $r_i > 0$ because $n_{d_g}b_{i-d_g} -b_{i-d} \ge b_{i-d_g}-b_{i-d} >0$.  Thus, there
is a module $N \in \gr(A)$ such that (\ref{qN}) holds. 

Now we prove the existence of $L$.   
Because $d_1+d_g=d$, 
$$
 \hbox{(\ref{qL})} \; = \; \sum_{i = m+1}^{m+d_1}(r_i+r_{i+d_g}t^{d_g})t^i. 
$$
However, $n_j=0$ for all $j \ge d_g+1$ so, when $m + 1 \leq i \leq m+d_1$,
$$
r_i + r_{i + d_g}t^{d_g} \; = \;  r_i - b_{i+d_g-d}t^{d_g} \; = \;  r_i - b_{i - d_1} + b_{i-d_1}(1-t^{d_g}),
$$
Therefore 
\begin{align*}
 \hbox{(\ref{qL})} &  \; = \;  \sum_{i = m+1}^{m+d_1}(r_i - b_{i - d_1})t^i +  \sum_{i = m+1}^{m+d_1}b_{i-d_1}(1-t^{d_g})t^i
 \\
 &  \; = \; q_L(t)
 \end{align*}
where 
$$
L \; = \; \Bigg(\bigoplus_{i = m+1}^{m+d_1}A(-i) ^{r_i - b_{i-d_1}}\Bigg)
 \; \oplus \; 
 \Bigg( \bigoplus_{i = m+1}^{m+d_1}\bigg( \frac{A}{x_g A}\bigg)(-i)^{b_{i-d_1}} \Bigg) 
$$
{\it provided} $r_i-b_{i-d_1}$ and $b_{i-d_1}$ are non-negative. 

Certainly, $b_{i-d_1} >0$ because $i-d_1 \ge m+1-d$.

If $m+1 \leq i  \leq m+d_1$, then
$$
r_i  \; \geq  \; n_{d_1}b_{i-d_1} + n_{d_g}b_{i - d_g} - b_{i-d} \; \geq \; b_{i-d_1}
$$
so $r_i-b_{i-d_1} \ge 0$. 
\end{proof}

\subsection{The proof of Theorem \ref{thm.main} modulo Proposition \ref{minimal}}

The proof is similar to that in \cite[\S2]{mosmi} so we refer the reader there for more detail.

By definition, $\qgr(A) =\gr(A)/\fdim(A)$, so the localization sequence for $K$-theory gives an exact sequence
\begin{equation}
\label{loc.seq}
K_0(\fdim(A))  \stackrel{\iota}{\longrightarrow}   K_0(\gr(A)) \longrightarrow K_0(\qgr(A))   \longrightarrow 0.
\end{equation}
Because every module in $\gr(A)$ has a finite resolution by finitely generated free (graded) modules, $K_0(\qgr(A))  
\cong \ZZ[t,t^{-1}]$. Because every module in $\fdim(A)$ is finite dimensional it has a composition series and its composition
factors are shifts of the trivial module $A/A_{\ge 1}$. Thus, by d\'evissage, $K_0(\fdim(A)) \cong K_0(\gr(k))\cong \ZZ[t,t^{-1}]$.
Because the inclusion $\fdim(A) \to \gr(A)$ commutes with the degree-shift functor, the map 
$K_0(\fdim(A))  \stackrel{\iota}{\longrightarrow}   K_0(\gr(A))$ is a homomorphism  of $\ZZ[t,t^{-1}]$-modules. Under the isomorphism
$K_0(\gr(k))\cong \ZZ[t,t^{-1}]$, $[k] \mapsto 1$. Therefore $K_0(\qgr(A))$ is isomorphic to $\ZZ[t,t^{-1}]$ modulo the ideal generated by 
$[A/A_{\ge 0}]$. It follows from the minimal resolution of $A/A_{\ge 0}$, that $[A/A_{\ge 0}]=f(t)$. 
Therefore the map
\begin{equation}
\label{isom.MS}
K_0(\qgr(A)) \to \frac{\bZ[t, t^{-1}]}{(f)}, \qquad [\pi^* M] \mapsto \ol{q_M(t)},
\end{equation}
is an isomorphism of abelian groups and $[\cM(1)]=t^{-1}[\cM]$ under this isomorphism.

Under the isomorphism (\ref{isom.MS}), the positive cone in $K_0(\qgr(A))$ is mapped to 
$\{\ol{q_M(t)} \; | \; M \in \gr(A)\}$. To show that (\ref{isom.MS}) is an isomorphism of 
{\it ordered} abelian groups we must show that
\begin{equation}
\label{same.pos.cone}
\{\ol{p}\; | \; p(\theta) > 0\} \cup \set{0} = \{ \ol{q_M(t)} \; | \; M \in \gr(A)\}.
\end{equation}

Let $M \in \gr(A)$. By Proposition \ref{positivity}, $q_M(\theta)  \geq 0$. If $q_M(\theta)>0$, then $\ol{q_M(t)}$ is in the left-hand side of (\ref{same.pos.cone}). If $q_M(\theta) = 0$, then $f(t)$ divides $q_M(t)$ by Lemma \ref{fdiml},  whence $\ol{q_M(t)}=0$. Thus, the right-hand side of (\ref{same.pos.cone})
is contained in  the left-hand side of (\ref{same.pos.cone}). 

If $p \in \bZ[t^{\pm 1}]$ and $p(\theta) > 0$, then $\ol{p} = \ol{q_M}$ for some $M \in \gr(A)$ by Lemma \ref{lem.existsM} so $\ol{p}$ is in
 the right-hand side of (\ref{same.pos.cone}). It is clear that $0$ is in  the right-hand side of (\ref{same.pos.cone}). Thus, the left-hand side of (\ref{same.pos.cone}) is contained in  the right-hand side of (\ref{same.pos.cone}). Hence (\ref{isom.MS}) is an isomorphism of ordered abelian groups.

Suppose $f$ is irreducible. The composition
\begin{equation}
\label{same.pos.cone2}
K_0(\qgr(A))  \to \frac{\bZ[t, t^{-1}]}{(f)} \to   \bZ[\theta], \quad [\pi^*M] \mapsto q_M(\theta),
\end{equation}
is certainly an isomorphism of abelian groups. By (\ref{same.pos.cone}), the image of the positive cone in $K_0(\qgr(A))$ under
this composition is $\bR_{\geq 0} \cap \bZ[\theta]$, the positive cone in $\bZ[\theta]$. Hence (\ref{same.pos.cone}) is an isomorphism  of ordered abelian groups and $[\cM(1)] = \theta^{-1}[\cM]$ under the isomorphism.
\hfill $\square$

\section{First proof of Proposition \ref{minimal}}\label{proof1}

\subsection{}
\label{ssect.strategy}
We recall the statement of Proposition \ref{minimal} and explain our strategy to prove it.

Let $d_1,\ldots,d_g$  be non-negative integers. Suppose $g \ge 3$, that $d_i+d_{g+1-i}=d$ for all $i$, 
and ${\sf gcd}\{d_1,\ldots,d_{g}\}=1$. Let $n_j$ be the number of $d_i$ that equal $j$. 
We will show that the polynomial
\begin{equation}
\label{defn.f3}
f(t)=t^d-n_{d-1}t^{d-1}-\cdots -n_1t+1
\end{equation}
has a unique root of maximal modulus, and that root is real.

We will associate to the data $d_1,\ldots,d_{g}$ a particular finite directed graph $G$. An {\sf adjacency matrix} for $G$ is a square matrix whose rows and columns are labelled by the vertices of $G$ and whose $uv$-entry is the number of arrows from $v$ to $u$. The {\sf characteristic polynomial} of $G$ is 
$$
p_G(t):=\det(tI-M)
$$
where $M$ is an adjacency matrix for $G$. We will show that $p_G(t)=t^{\ell-d}f(t)$ where $\ell=d_1+\cdots+d_g$. 
We also show that $M$ is primitive, i.e., all entries of $M^n$ are positive for $n \gg 0$.  
We then apply the Perron-Frobenius theorem which says that a primitive matrix has a positive real
eigenvalue of multiplicity 1, $\rho$ say, with the property that $|\lambda|<\rho$ for all other eigenvalues $\lambda$. But the non-zero eigenvalues of $M$ are the roots of $f(t)$. Since we already know that $f(t)$ has only two positive real 
roots,  $\theta <1$ and $\theta^{-1}>1$,  $\rho=\theta^{-1}$.  
Since the coefficient of $t^i$ in $f(t)$ is the same as that of $t^{d-i}$,  $f(t)=t^df(t^{-1})$. Thus $f(\lambda)=0$
if and only if $f(\lambda^{-1})=0$. Hence $\theta^{-1}$ is the unique root of $f(t)$ having largest modulus.

\subsection{}
We will use Theorem \ref{cycles} to compute the characteristic polynomial of  $G$. 
First we need some notation.  

A {\sf simple cycle} in $G$ is a directed path that begins and ends at the same vertex and does not pass through any vertex 
more than once. We introduce the notation for an arbitrary directed graph $G$:
\begin{enumerate}
\item
$v(G):=$ the number of vertices in $G$;
  \item{}
  $c(G):=$ the number of connected components in $G$;
  \item 
  $Z(G):= \{\hbox{simple cycles in $G$}\}$;
  \item
  $\Zol(G):= \{\hbox{subgraphs of $G$ that are a disjoint union of simple cycles}\}$.
\end{enumerate}

\begin{thm}
\label{cycles}
\cite[Theorem 1.2]{graph} 
Let $G$ be a directed graph with $\l$ vertices. Then 
\begin{equation}
\label{pG}
p_G(t) = t^\l + c_1t^{\l-1} + \cdots + c_{\l-1}t + c_\l
\end{equation}
where 
\begin{equation}
\label{ci}
c_i := \sum_{\begin{subarray} {c} 
 			Q \in \Zol(G)  \\
			v(Q)=i
      	          \end{subarray}}   			
	(-1)^{c(Q)}.
\end{equation}
\end{thm}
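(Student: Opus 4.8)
The plan is to prove Theorem \ref{cycles} directly from the Leibniz (permutation) expansion of $\det(tI-M)$, grouping the nonzero terms according to the disjoint-cycle decomposition of the indexing permutation and matching each such decomposition with a subgraph of $G$ that is a disjoint union of simple cycles. Throughout I keep the convention fixed in \S\ref{ssect.strategy}, so the $uv$-entry $M_{uv}$ counts the arrows from $v$ to $u$. First I would invoke the standard expansion of a characteristic polynomial in terms of principal minors,
$$
\det(tI-M) \;=\; \sum_{i=0}^{\l} (-1)^i \left( \sum_{\substack{S \subseteq V(G) \\ |S|=i}} \det(M_S) \right) t^{\l-i},
$$
where $M_S$ is the principal submatrix of $M$ on the rows and columns indexed by a vertex subset $S$. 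Comparing with (\ref{pG}), it then suffices to show that for each $i$,
$$
(-1)^i \sum_{|S|=i} \det(M_S) \;=\; \sum_{\substack{Q \in \Zol(G) \\ v(Q)=i}} (-1)^{c(Q)}.
$$

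Next I would expand each minor by the Leibniz formula $\det(M_S) = \sum_{\sigma} \operatorname{sgn}(\sigma)\prod_{v \in S} M_{\sigma(v),v}$, the sum ranging over permutations $\sigma$ of $S$. A summand is nonzero only when $M_{\sigma(v),v} \ne 0$ for every $v \in S$, i.e. when $G$ has at least one arrow from $v$ to $\sigma(v)$ for each $v$. Writing $\sigma$ as a product of disjoint cycles and following each cycle $v \to \sigma(v) \to \sigma^2(v) \to \cdots$ along such arrows produces exactly a spanning subgraph of the induced subgraph $G[S]$ in which every vertex has in- and out-degree $1$ --- that is, a disjoint union of directed simple cycles covering $S$, a loop being a length-$1$ cycle at a fixed point of $\sigma$. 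The factor $\prod_{v\in S} M_{\sigma(v),v}$ counts precisely the number of ways to choose one arrow for each step, so fully expanding the product realizes each $Q \in \Zol(G)$ with vertex set $S$ exactly once. Summing over all $S$ with $|S|=i$ and all $\sigma$ therefore enumerates all $Q \in \Zol(G)$ with $v(Q)=i$, with the correct multiplicity.

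The remaining, and most delicate, step is the sign. A permutation $\sigma$ of an $i$-element set whose disjoint-cycle decomposition has $c(Q)$ cycles satisfies $\operatorname{sgn}(\sigma) = (-1)^{i-c(Q)}$; multiplying by the prefactor $(-1)^i$ gives total sign $(-1)^{2i-c(Q)} = (-1)^{c(Q)}$, independent of $i$ and matching (\ref{ci}). Collecting terms then yields the displayed identity and hence Theorem \ref{cycles}. The main obstacle is exactly this sign bookkeeping together with the clean dictionary between permutations-with-arrow-multiplicities and subgraphs in $\Zol(G)$: one must check that fixed points are correctly interpreted as loops rather than discarded, and that the multi-arrow counts in $\prod_v M_{\sigma(v),v}$ reproduce the subgraph count of $\Zol(G)$ without double-counting, which is what makes the coefficient collapse cleanly to $(-1)^{c(Q)}$.
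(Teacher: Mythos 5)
Your proof is correct, but there is nothing in the paper to compare it against: the authors do not prove Theorem \ref{cycles} at all, they quote it from \cite[Theorem 1.2]{graph} (Cvetkovi\'c--Doob--Sachs) and use only the statement, plus its evaluation at $t=0$ recorded in Remark \ref{rem.for.referee}. What you have written is the standard proof of this classical ``coefficients theorem'' (Sachs' theorem in its digraph form), and every step checks out: the expansion of $\det(tI-M)$ into principal minors, the Leibniz expansion of each minor $\det(M_S)$, the observation that a permutation $\sigma$ of $S$ contributes only when $G$ has an arrow $v \to \sigma(v)$ for every $v \in S$, the bijection between pairs (permutation of $S$, choice of one arrow per step) and subgraphs $Q \in \Zol(G)$ with vertex set $S$, and the sign computation $\operatorname{sgn}(\sigma) = (-1)^{|S|-c(Q)}$, which against the prefactor $(-1)^{|S|}$ yields $(-1)^{c(Q)}$. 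The two delicate points you flag are exactly the right ones and you handle both correctly: fixed points of $\sigma$ must be matched with loops (length-one simple cycles), not discarded, and when $M_{\sigma(v),v} > 1$ the product $\prod_{v} M_{\sigma(v),v}$ expands into one term per subgraph, so multigraph edges are counted without duplication -- this matters here since the adjacency matrices in this paper are defined to record arrow multiplicities. The only thing your argument buys beyond the paper is self-containedness; conversely, the citation buys the authors brevity, since the result is textbook material.
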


\subsubsection{Remark}
\label{rem.for.referee}
In Theorem \ref{cycles}, we note for future reference that 
$$
p_G(0)=c_\l =   \sum_{\begin{subarray} {c} 
 			Q \in \Zol(G)  \\
			v(Q)=v(G)
      	          \end{subarray}}   			
	(-1)^{c(Q)}.
$$	

\subsection{}
The $x_i$s are labelled so that $\deg(x_1) \le \cdots \le \deg(x_g)$.

The free algebra on $k \langle x_1,\ldots,x_g\rangle$ is the path algebra of the quiver with one vertex $\star$ and $g$ loops from $\star$ to $\star$ labelled $x_1,\ldots,x_g$. We replace each  loop $x_i$ by $d_i':=\deg(x_i)-1=d_i-1$ 
vertices labelled $x_{i1},\ldots,x_{id_i'}$ and arrows
$$
\xymatrix@C=40pt{
\star \ar[r]^{\alpha_{i0}} & x_{i1} \ar[r]^{\alpha_{i1}} & \cdots & \cdots \ar[r] & x_{id_i'} \ar[r]^{\alpha_{id_i'}} &\star
}
$$
 The graph obtained by this procedure is the graph associated to $k\la x_1, \ldots, x_g\ra$ in \cite{holdsis}.

\subsubsection{Example}
\label{eg.1}
If $A$ is generated by $x_1,x_2,x_3$ and $\deg(x_i)= i$, the associated graph is
$$
\xymatrix@R=20pt@C=30pt{
x_{21} \ar@/^10pt/@<-0.5ex>[dr]^{\alpha_{21}} & & x_{31} \ar@/^20pt/[dd]^{\alpha_{31}} \\
& \star \ar@/^10pt/[ul]^{\alpha_{20}} \ar@/^10pt/@<-0.5ex>[ur]^{\alpha_{30}} \ar@(l,d)[]_{\alpha_{10}} \\
& & x_{32} \ar@/^10pt/[ul]^{\alpha_{32}}
}
$$
 
\subsection{The second graph associated to $A$}
We now form a second directed graph, the vertices of which are the arrows in the previous graph. In the second graph there is an arrow from vertex $u$ to vertex $v$ if in the first graph the arrow $u$ can be followed by the arrow $v$, {\bf except} 
we do not include an arrow $\alpha_{1d_1'} \to \alpha_{g0}$.

We write $G$, or $G(A)$, for the second graph associated to $A$.

The second graph associated to Example \ref{eg.1} is  
$$
\xymatrix@=30pt{
& \alpha_{20} \ar@/^10pt/[r] & \alpha_{21} \ar[l] \ar[dr] \ar[dll]& \\
\alpha_{10} \ar@(ul,dl)[] \ar[ur] & & & \alpha_{30} \ar[dl]\\
& \alpha_{32} \ar[ul] \ar[uu] \ar[rru]& \alpha_{31} \ar[l]&
}
$$
Note the absence of an arrow from $\alpha_{10}$ to $\alpha_{30}$. 

\begin{prop}
\label{prop.irred}
If $u$ and $v$ are vertices in $G$, there is a directed path starting at $u$ and ending at $v$.
\end{prop}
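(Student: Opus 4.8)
The plan is to prove that $G$ is strongly connected by first writing down its arrows explicitly and then routing paths strand by strand. From the construction, the vertices of $G$ are the arrows $\alpha_{ij}$ (with $1 \le i \le g$ and $0 \le j \le d_i'$) of the first graph; call the vertices $\alpha_{i0}, \dots, \alpha_{id_i'}$ arising from the $i$-th loop the $i$-th strand. The graph $G$ then has arrows of exactly two types. Type (i): inside the $i$-th strand, $\alpha_{ij} \to \alpha_{i,j+1}$ for $0 \le j < d_i'$, since the head of $\alpha_{ij}$ is the internal vertex $x_{i,j+1}$, which is the tail of $\alpha_{i,j+1}$ and of no other arrow. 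Type (ii): from a strand-end to a strand-start, $\alpha_{id_i'} \to \alpha_{i'0}$ for every pair $(i,i')$, since the head of $\alpha_{id_i'}$ and the tail of $\alpha_{i'0}$ are both $\star$ — with the single exception $\alpha_{1d_1'} \to \alpha_{g0}$, which has been deleted. So the first step is simply to record this arrow set.

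Call $\alpha_{i0}$ and $\alpha_{id_i'}$ the start and the end of the $i$-th strand. Using type-(i) arrows alone, any vertex $\alpha_{ij}$ reaches the end $\alpha_{id_i'}$ of its own strand, and any start $\alpha_{i'0}$ reaches any vertex $\alpha_{i'j'}$ of its strand. Hence the whole proposition reduces to one claim: for all $i,i'$ there is a directed path from the end $\alpha_{id_i'}$ to the start $\alpha_{i'0}$. Granting this, a path from an arbitrary $u=\alpha_{ij}$ to an arbitrary $v=\alpha_{i'j'}$ is produced by running forward to $\alpha_{id_i'}$, crossing to $\alpha_{i'0}$, and running forward to $\alpha_{i'j'}$.

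To establish the claim, observe that if $(i,i') \ne (1,g)$ then the type-(ii) arrow $\alpha_{id_i'} \to \alpha_{i'0}$ is present and we are done immediately. The only obstruction is the deleted arrow, namely passing from the end of strand $1$ to the start of strand $g$, and this is exactly where I would use the hypothesis $g \ge 3$. Choose a third index, say $2$, which satisfies $2 \ne 1$ and $2 \ne g$; then the detour $\alpha_{1d_1'} \to \alpha_{20}$ (legal, as $(1,2) \ne (1,g)$), followed by the type-(i) arrows across strand $2$ to $\alpha_{2d_2'}$, followed by $\alpha_{2d_2'} \to \alpha_{g0}$ (legal, as $(2,g) \ne (1,g)$), completes the crossing. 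The main obstacle is precisely this one missing arrow: the detour is the only non-formal part of the argument, and it is also the only place $g \ge 3$ is used, which is consistent with setting the case $g=2$ aside, since for $g=2$ one genuinely cannot leave strand $1$ and $G$ is not strongly connected.

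Finally I would assemble the pieces and dispose of the degenerate situation $d_i = 1$, in which the $i$-th strand is the single vertex $\alpha_{i0} = \alpha_{id_i'}$ with no type-(i) arrows; this causes no difficulty, since such a strand still has coincident start and end and the routing above applies verbatim.
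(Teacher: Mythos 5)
Your proof is correct and follows essentially the same route as the paper's: decompose $G$ into strands, move forward within a strand via the internal arrows, and cross between strands via the end-to-start arrows. The only cosmetic difference is how the deleted arrow $\alpha_{1d_1'} \to \alpha_{g0}$ is circumvented --- the paper routes around the cyclic chain of strands $1 \to 2 \to \cdots \to g \to 1$, while you take a single detour through strand $2$; both use $g \ge 3$ in the same way.
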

\begin{proof}
There is a directed path $\alpha_{i0}\to \alpha_{i1} \to \cdots \to \alpha_{id_i'} \to \alpha_{i0}$ so the result is true if 
$u=\alpha_{ij}$ and $v=\alpha_{ik}$. There are also arrows 
$$
\alpha_{1d_1'} \to \alpha_{20}, \quad \alpha_{2d_2'} \to \alpha_{30}, \quad \ldots \quad \alpha_{g-1,d'_{g-1}} \to \alpha_{g0},
\quad
\alpha_{gd_g'} \to \alpha_{10}
$$
so the result is true if $u=\alpha_{i_1j_1}$ and $v=\alpha_{i_2j_2}$. 
\end{proof}

\begin{prop}
\label{prop.primitive}
Let $M$ be an adjacency matrix for $G$. Then every entry in $M^n$ is non-zero for $n \gg 0$.
\end{prop}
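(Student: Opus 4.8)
The plan is to show that $M$ is \emph{primitive} in the sense of Perron--Frobenius theory and to read off the entrywise positivity of $M^n$ from the two standard ingredients: irreducibility (strong connectivity of $G$) and aperiodicity (the greatest common divisor of the lengths of the directed cycles in $G$ equals $1$). Recall that, with the adjacency convention fixed above, the $uv$-entry of $M^n$ counts the directed paths of length $n$ from $v$ to $u$; thus the assertion is precisely that for all $n \gg 0$ there is a directed path of length exactly $n$ between every ordered pair of vertices.

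Irreducibility is already in hand: it is exactly Proposition \ref{prop.irred}, which says that between any two vertices $u$ and $v$ of $G$ there is a directed path, so $G$ is strongly connected. For aperiodicity I would exhibit, for each generator $x_i$, the directed cycle $C_i$ on the vertices $\alpha_{i0}, \alpha_{i1}, \ldots, \alpha_{id_i'}$ given by $\alpha_{i0} \to \alpha_{i1} \to \cdots \to \alpha_{id_i'} \to \alpha_{i0}$. Since $C_i$ visits the $d_i' + 1 = d_i$ vertices of the $i$-th block and uses $d_i$ arrows, it has length $d_i$. The closing arrow $\alpha_{id_i'} \to \alpha_{i0}$ really is present in $G$: the only arrow deleted in the construction of $G$ is $\alpha_{1d_1'} \to \alpha_{g0}$, and for the closing arrow of $C_i$ to coincide with it one would need $i = 1$ and $i = g$ simultaneously, which is impossible as $g \ge 3$. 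Hence $G$ contains cycles of lengths $d_1, \ldots, d_g$, and the period of $G$ --- the greatest common divisor of the lengths of all its cycles --- divides each $d_i$, hence divides ${\sf gcd}\{d_1, \ldots, d_g\} = 1$. So $G$ is aperiodic.

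Finally I would invoke the standard fact that a strongly connected, aperiodic digraph has a primitive adjacency matrix. Concretely: fixing a vertex $w$, the set of lengths of closed walks through $w$ is closed under addition and has gcd $1$ (the cycles $C_i$ translate into closed walks at $w$ via the strong connectivity of Proposition \ref{prop.irred}), so by the numerical-semigroup (Chicken McNugget) lemma it contains every sufficiently large integer. Combining this with strong connectivity --- which lets one insert a fixed-length detour from any $v$ to $w$ and from $w$ to any $u$ --- yields, for all $n \gg 0$, a walk of length exactly $n$ between every ordered pair of vertices, that is, $M^n > 0$ entrywise.

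The one point that requires genuine care, and which I regard as the main (if minor) obstacle, is the bookkeeping that makes the cycles $C_i$ truly available in $G$ in spite of the deleted arrow, together with the verification that their lengths are exactly $d_1, \ldots, d_g$; once that is confirmed, the coprimality hypothesis ${\sf gcd}\{d_1, \ldots, d_g\} = 1$ does all the remaining work, and the Perron--Frobenius conclusion about the unique root of maximal modulus follows as explained in \S\ref{ssect.strategy}.
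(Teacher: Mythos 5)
Your proof is correct and takes essentially the same route as the paper's: irreducibility is quoted from Proposition \ref{prop.irred}, aperiodicity comes from the closed paths of lengths $d_1,\ldots,d_g$ based at the vertices $\alpha_{i0}$ together with $\gcd\{d_1,\ldots,d_g\}=1$, and primitivity then yields $M^n>0$ entrywise for $n\gg 0$ (the paper cites \cite[Thm. 4.5.8]{LM} for this last implication, where you instead sketch the numerical-semigroup argument). Your explicit check that the closing arrows $\alpha_{id_i'}\to\alpha_{i0}$ survive the deletion of $\alpha_{1d_1'}\to\alpha_{g0}$ is a detail the paper leaves implicit, but it is the same argument.
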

\begin{proof}
In the language of  \cite[Defn. 4.2.2]{LM}, Proposition \ref{prop.irred} says that $M$ is irreducible. 

The {\sf period} of a vertex $v$ in $G$ is the greatest common divisor of the non-trivial 
directed paths that begin and end at $v$. The {\sf period} of $G$ is the 
greatest common divisor of the periods of its vertices. Since there is a directed path of length $d_i=\deg(x_i)$ from $\alpha_{i0}$
to itself, the period of $G$ divides $\gcd\{d_1,\ldots,d_g\}$ which is 1. The period of $G$ is therefore 1. 
Thus, in the language of \cite[Defn. 4.5.2]{LM}, $M$ is aperiodic and therefore primitive \cite[Defn. 4.5.7]{LM}.
Hence \cite[Thm.  4.5.8]{LM} applies to $M$, and gives the result claimed.
\end{proof}

The Perron-Frobenius theorem \cite[Thm. 1, p.64]{G}  therefore applies to $M$ giving the following result.

\begin{cor}
\label{cor.PF}
The characteristic polynomial for $G$ has a unique eigenvalue of maximal modulus and that eigenvalue is simple and real.
\end{cor}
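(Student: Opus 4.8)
The plan is to read the conclusion off directly from the Perron--Frobenius theorem, exploiting the fact that the only nontrivial hypothesis---primitivity of $M$---has just been established in Proposition \ref{prop.primitive}. First I would recall that the eigenvalues of $M$ are precisely the roots of the characteristic polynomial $p_G(t)=\det(tI-M)$, so that any statement about the eigenvalues of $M$ translates verbatim into the desired statement about the roots of $p_G$.

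Next I would invoke the version of the Perron--Frobenius theorem for primitive nonnegative matrices \cite[Thm. 1, p.64]{G}. Since $M$ is a nonnegative integer matrix (being an adjacency matrix) and is primitive by Proposition \ref{prop.primitive}, the theorem supplies a real eigenvalue $\rho>0$, equal to the spectral radius of $M$, which is a simple root of $p_G$ and which satisfies $|\lambda|<\rho$ strictly for every other eigenvalue $\lambda$ of $M$. This $\rho$ is therefore the unique eigenvalue of maximal modulus, and it is simple and real, which is exactly the assertion of the corollary.

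The step carrying the real content is the appeal to \emph{primitivity} rather than to mere irreducibility. Proposition \ref{prop.irred} by itself (irreducibility) would guarantee only a simple positive Perron root $\rho$ equal to the spectral radius, but would still permit further eigenvalues lying on the circle $|\lambda|=\rho$, as occurs for a cyclic permutation matrix. It is the aperiodicity argument inside Proposition \ref{prop.primitive}---that the period of $G$ divides $\gcd\{d_1,\ldots,d_g\}=1$---that strengthens this to the strict inequality $|\lambda|<\rho$ and hence to the \emph{uniqueness} of the maximal-modulus eigenvalue. Thus, although the corollary is formally immediate once primitivity is in hand, the genuine obstacle is the period-one condition, and that has already been dispatched; here there is essentially nothing left to do but cite the theorem.
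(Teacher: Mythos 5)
Your proposal is correct and matches the paper's own argument, which likewise deduces the corollary by applying the Perron--Frobenius theorem for primitive matrices \cite[Thm.~1, p.64]{G} to the adjacency matrix $M$, whose primitivity was just established in Proposition \ref{prop.primitive}. Your added remark that primitivity (rather than mere irreducibility) is what forces the strict inequality $|\lambda|<\rho$ is exactly the point the paper's aperiodicity argument is designed to secure.
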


Our next goal, achieved in Proposition \ref{charpoly}, is to show that $p_G(t)=t^{\l-d}f(t)$ for a suitable $\l$.

\subsection{Other graphs associated to $A$}
We now write $\cX:=\{x_1, \ldots, x_g\}$ and define the directed graph $\widehat{\cX}$ by declaring that its vertex set 
is $\cX$ and there is an arrow $x_i \to x_j$ for all $(x_i,x_j) \in \cX^2-\{(x_1,x_g)\}$. 
For each non-empty subset  $X \subset \cX$ let $\widehat{X}$  be the full subgraph of $\widehat{\cX}$
with vertex set $X$.

\subsubsection{}
If $g = 4$, then
$$ 
 \{x_1, x_2, x_3\}\,\widehat{\phantom{x}} \quad = \quad 
 \xymatrix@R=30pt@C=30pt{
& x_2 \ar@(ur,ul)[] \ar@/^5pt/[dr] \ar@/^5pt/[dl]& \\
x_1 \ar@/^5pt/[ur] \ar@/^5pt/[rr] \ar@(ul,dl)[] & & x_3 \ar@/^5pt/[ul] \ar@/^5pt/[ll] \ar@(ur,dr)[]}
$$
and 
$$
\{x_1, x_2, x_4\}\,\widehat{\phantom{x}} \; = \; 
\xymatrix@R=30pt@C=30pt{
& x_2 \ar@(ur,ul)[] \ar@/^5pt/[dr] \ar@/^5pt/[dl]& \\
x_1 \ar@/^5pt/[ur]  \ar@(ul,dl)[] & & x_4 \ar@/^5pt/[ul] \ar@/^5pt/[ll] \ar@(ur,dr)[]}
$$

\vskip .2in

\begin{lem}
\label{lem.char.poly}
Let $X \subset \{x_1,\ldots,x_g\}$.
The constant term in the characteristic polynomial for $\widehat{X}$ is 
$$
p_{\widehat{X}}(0) \; = \; \begin{cases}
1 & \text{if $X=\{x_1,x_g\}$}
\\
-1 & \text{if $|X|=1$}
\\
0 & \text{otherwise.}
\end{cases}
$$
\end{lem}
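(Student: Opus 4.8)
We want the constant term of the characteristic polynomial of $\widehat{X}$, the full subgraph of $\widehat{\cX}$ on a vertex set $X$. By the Remark following Theorem~\ref{cycles}, $p_{\widehat{X}}(0)$ is the coefficient $c_{|X|}$, which equals
$$
p_{\widehat{X}}(0) \;=\; \sum_{\substack{Q \in \Zol(\widehat{X}) \\ v(Q)=|X|}} (-1)^{c(Q)}.
$$
So the plan is to enumerate the spanning disjoint-unions-of-simple-cycles of $\widehat{X}$, i.e. those subgraphs $Q$ that are a disjoint union of simple cycles and cover \emph{all} vertices of $X$. Each such $Q$ is exactly a choice of a permutation of the vertex set $X$ built out of directed cycles using only arrows present in $\widehat{X}$, and $c(Q)$ is the number of cycles in that permutation. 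The one thing that makes $\widehat{\cX}$ interesting is that it is the \emph{complete} directed graph on $\cX$ with a loop at every vertex, \emph{except} that the single arrow $x_1 \to x_g$ is deleted.

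The key observation I would exploit is this: in the complete directed graph with all loops present (call it $K$), every permutation $\tau$ of $X$ corresponds to a spanning disjoint union of simple cycles, and $c(Q)$ is the number of cycles of $\tau$; hence the spanning sum for $K$ is $\sum_{\tau} (-1)^{c(\tau)}$. I would first compute this sum for $K$, and then correct for the missing arrow $x_1 \to x_g$. First I would handle the easy cases. If $|X|=1$ the only $Q$ is the loop, a single $1$-cycle, so the sum is $(-1)^1=-1$. For $X=\{x_1,x_g\}$ I would simply list the spanning cycle-covers directly: since the arrow $x_1\to x_g$ is absent, the admissible permutations are the identity (two loops, $c=2$, contributing $+1$) but \emph{not} the transposition (which would need $x_1\to x_g$), so the sum is $+1$.

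For the main case I would use a standard identity: for the complete graph $K$ on a finite set $Y$ with all loops present, $\sum_{\tau \in S_Y} (-1)^{c(\tau)} = 0$ whenever $|Y| \ge 2$, because $(-1)^{c(\tau)}$ is (up to a fixed sign depending only on $|Y|$) the sign of $\tau$, and the signs of permutations sum to zero once $|Y|\ge 2$. Concretely, $c(\tau) \equiv |Y| - \operatorname{(number\ of\ transpositions)} \pmod 2$, so $(-1)^{c(\tau)} = (-1)^{|Y|}\operatorname{sgn}(\tau)$, and $\sum_\tau \operatorname{sgn}(\tau)=0$. Thus the spanning sum over $K$ vanishes for $|X|\ge 2$. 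To pass from $K$ to $\widehat{X}$ I subtract the contribution of exactly those cycle-covers of $K$ that \emph{use} the forbidden arrow $x_1\to x_g$ (these are present in $K$ but absent in $\widehat{X}$). Such covers are in bijection with spanning cycle-covers of the graph on $X\setminus\{x_1,x_g\}$ after contracting the arc $x_1\to x_g$ into the rest of its cycle; I would set up this bijection carefully and check that the sign $(-1)^{c(Q)}$ transforms in a controlled way, so that the total correction is again a signed-permutation sum that vanishes when $|X\setminus\{x_1,x_g\}|\ge 1$, i.e. when $|X|\ge 3$, and gives the stated answers in the remaining small cases.

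The main obstacle, and the step that needs real care rather than routine calculation, is the bookkeeping in the bijection used to handle the deleted arrow: I must track precisely how removing the arc $x_1\to x_g$ changes the cycle structure (whether $x_1$ and $x_g$ lie in the same cycle of $\tau$, and whether that cycle is a $2$-cycle, a loop situation, or longer), and confirm the resulting signs cancel for $|X|\ge 3$. Once that cancellation is verified, combined with the direct computations for $|X|=1$ and $X=\{x_1,x_g\}$, the three cases of the lemma follow; the ``otherwise'' value $0$ covers both $|X|\ge 3$ and the two-element sets other than $\{x_1,x_g\}$ (for which the full transposition \emph{is} available, restoring the vanishing signed sum).
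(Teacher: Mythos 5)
Your argument is correct, but it takes a genuinely different route from the paper's. The paper never expands $p_{\widehat{X}}(0)$ into cycle covers: it simply writes $p_{\widehat{X}}(0)=(-1)^{|X|}\det(M)$ for an adjacency matrix $M$ of $\widehat{X}$ and observes that $M$ is an all-ones matrix except possibly for a single off-diagonal zero (present exactly when $\{x_1,x_g\}\subseteq X$). Hence $M$ is singular whenever $|X|=2$ with $X\neq\{x_1,x_g\}$ (the all-ones $2\times 2$ matrix) or $|X|\ge 3$ (at least two rows are all ones, hence equal), while the two remaining cases are the $1\times 1$ matrix $(1)$ and the triangular matrix $\binom{1\;1}{0\;1}$; this is a three-line computation. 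Your route --- identifying spanning cycle covers of the complete looped digraph with permutations, using $(-1)^{c(\tau)}=(-1)^{|X|}\operatorname{sgn}(\tau)$ together with $\sum_\tau\operatorname{sgn}(\tau)=0$ for $|X|\ge 2$, and then correcting for the deleted arc $x_1\to x_g$ --- is heavier but stays entirely inside the cycle-cover formalism of Theorem \ref{cycles} used in the rest of the section, and it explains the vanishing as a parity-of-permutations phenomenon that generalizes to a complete digraph minus any single arc. One point to repair in your write-up: the covers using the forbidden arc are in bijection with permutations of $X\setminus\{x_1\}$, not of $X\setminus\{x_1,x_g\}$; contract the arc $x_1\to x_g$ by deleting $x_1$ and redirecting anything that mapped to $x_1$ so that it maps to $x_g$. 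This contraction preserves the number of cycles, so the correction term equals $\sum_{\sigma\in S_{X\setminus\{x_1\}}}(-1)^{c(\sigma)}$, which vanishes exactly when $|X\setminus\{x_1\}|\ge 2$, i.e. $|X|\ge 3$, and equals $-1$ when $X=\{x_1,x_g\}$, giving $0-(-1)=1$ in that case. With that one-line fix, the bookkeeping you flagged as the main obstacle is settled and all three cases of the lemma follow as you outlined.
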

\begin{proof}
Let $M$ be an adjacency matrix of  $\widehat{X}$. Then the constant term in the characteristic polynomial for $\widehat{X}$ is  $p_{\widehat{X}}(0)=(-1)^{|X|}\det(M)$. 

If $|X|=1$, then $\widehat{X}$ consists of one vertex with a single loop so $M=(1)$ whence $p_{\widehat{X}}(0)=-1$.

If $X=\{x_1,x_g\}$, then $\widehat{X}$ has vertices $x_1$ and $x_g$, an arrow from $x_g$ to $x_1$, and a loop at each vertex.
Hence $\binom{ 1 \; 1}{0 \; 1}$ is an adjacency matrix for $\widehat{X}$ and the constant term is 1.

If $|X|=2$ and $X\ne \{x_1,x_g\}$, then the adjacency matrix for $\widehat{X}$ is $\binom{ 1 \; 1}{1 \; 1}$  so the constant term is 0.

Suppose $|X| \ge 3$.
If  $\set{x_1, x_g} \subseteq X$, then $M$ has a single off-diagonal 0 and all its other entries are 1; in particular, $M$ is singular so 
the constant term is 0. If $\set{x_1, x_g} \not\subseteq X$, then every entry in $M$ is 1 so $M$ is singular 
and the constant term is 0.
\end{proof}

\subsection{The paths $\beta_1,\ldots, \beta_g$ in $G$}
For each $1 \leq i \leq g$, let $\beta_i$ be the path 
$$
\alpha_{i0} \to \alpha_{i1} \to \cdots \to \alpha_{id_i'}.
$$
In example \ref{eg.1}, $\beta_1$ is the trivial path at vertex 
$\alpha_{10}$, $\beta_2$ is the arrow 
$\alpha_{20} \longrightarrow \alpha_{21}$, 
and $\beta_3$ is the path  $\alpha_{30} \longrightarrow \alpha_{31} \longrightarrow \alpha_{32}$.

\begin{prop}
Let $i_1,\ldots,i_m$ be pairwise distinct elements of $\{1, \ldots, g\}$ such that 
$(1,g) \not\in \{(i_m,i_1), (i_1,i_2), \ldots, (i_{m-1},i_m)\}$. 
Then there is a simple cycle in $G$ of the form 
\begin{equation}\label{cycleQ'1}
\beta_{i_1} \to \beta_{i_2} \to \cdots \to \beta_{i_m} \to \alpha_{i_10}
\end{equation}
and every simple cycle in $G$ is of this form, up to choice of starting point.
\end{prop}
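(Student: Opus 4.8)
The plan is to exploit the fact that $G$ has very few ``branch'' vertices: inside each path $\beta_i$ the predecessor and successor of a vertex are forced, so any cycle is rigidly constrained. First I would record the local structure of $G$. For $1\le j \le d_i'$ the only arrow into $\alpha_{ij}$ is $\alpha_{i,j-1}\to\alpha_{ij}$, and for $0\le j< d_i'$ the only arrow out of $\alpha_{ij}$ is $\alpha_{ij}\to\alpha_{i,j+1}$. The vertices with more than one incoming (resp.\ outgoing) arrow are exactly the ``heads'' $\alpha_{i0}$ (resp.\ the ``tails'' $\alpha_{id_i'}$): an arrow $\alpha_{kd_k'}\to\alpha_{i0}$ exists for every $k$ except when $(k,i)=(1,g)$, and dually an arrow $\alpha_{id_i'}\to\alpha_{k0}$ exists for every $k$ except when $(i,k)=(1,g)$.

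For the classification direction, let $C$ be a simple cycle meeting some vertex of $\beta_i$. I claim $C$ traverses all of $\beta_i$ in order. If $C$ uses $\alpha_{ij}$ with $j\ge 1$, then since $\alpha_{i,j-1}$ is the unique in-neighbour of $\alpha_{ij}$, $C$ must enter $\alpha_{ij}$ along $\alpha_{i,j-1}\to\alpha_{ij}$; descending induction on $j$ shows $C$ contains $\alpha_{i0}\to\cdots\to\alpha_{ij}$ and enters $\beta_i$ at its head $\alpha_{i0}$. Dually, since $\alpha_{i,j+1}$ is the unique out-neighbour of $\alpha_{ij}$ for $j<d_i'$, ascending induction shows $C$ contains $\alpha_{ij}\to\cdots\to\alpha_{id_i'}$ and leaves $\beta_i$ from its tail $\alpha_{id_i'}$. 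Hence $C$ runs through the whole of $\beta_i$, entering at $\alpha_{i0}$ and exiting at $\alpha_{id_i'}$. (When $d_i=1$ the path $\beta_i$ is the single vertex $\alpha_{i0}=\alpha_{id_i'}$ and the assertion is trivial.) Consequently $C$ decomposes as a cyclic concatenation of complete paths $\beta_{i_1},\beta_{i_2},\ldots,\beta_{i_m}$, with the $i_a$ pairwise distinct since a repeated loop would revisit a vertex, and each passage from one $\beta$ to the next realized by a tail-to-head arrow. Taking $\alpha_{i_10}$ as starting point puts $C$ in the stated normal form, and the existence of every transition arrow, including the closing one from the tail of $\beta_{i_m}$ to $\alpha_{i_10}$, is precisely the assertion that no consecutive pair (cyclically) equals $(1,g)$, i.e.\ that $(1,g)\notin\{(i_m,i_1),(i_1,i_2),\ldots,(i_{m-1},i_m)\}$.

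For existence, given pairwise distinct $i_1,\ldots,i_m$ subject to this condition, I would form the closed walk that traverses $\beta_{i_1}$, then the tail-to-head arrow into $\alpha_{i_20}$, then $\beta_{i_2}$, and so on, finally returning from the tail of $\beta_{i_m}$ to $\alpha_{i_10}$. Every internal step is an arrow inside some $\beta_{i_a}$; every transition step is a tail-to-head arrow, which is present because the hypothesis excludes the only missing such arrow, namely the one corresponding to $(1,g)$; and all vertices are distinct since distinct loops are vertex-disjoint and the $i_a$ are distinct. Thus this walk is a simple cycle of the required form.

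The two inductions and the vertex-disjointness bookkeeping are routine once the local structure is in hand; the one point that needs care is matching the combinatorial exclusion of $(1,g)$ to the (non-)existence of transition arrows in both directions, together with confirming that the degenerate paths $\beta_i$ with $d_i=1$ are covered. I expect the main obstacle to be phrasing the forced-traversal step precisely enough that the classification direction is genuinely complete rather than merely plausible.
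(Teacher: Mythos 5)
Your proof is correct and follows essentially the same route as the paper's: the forced traversal of each $\beta_i$ (via uniqueness of in/out-neighbours at non-head/non-tail vertices) for the classification direction, and the existence of tail-to-head arrows exactly when the pair is not $(1,g)$ for the existence direction. Your version simply spells out the two inductions and the degenerate case $d_i=1$ that the paper leaves implicit.
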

\begin{proof}
Let $r,s \in \{1,\ldots,g\}$ and assume $r \ne s$. If $(r,s) \ne (1,g)$, then there is an arrow from $\alpha_{rd_r'}$, 
the vertex at which $\beta_r$ ends, to $\alpha_{s0}$, the vertex at which $\beta_s$ starts; hence there is a path ``traverse $\beta_r$ then traverse $\beta_s$''; we denote this path by $\beta_r \to \beta_s$. It follows that there is a path of the form  (\ref{cycleQ'1}).

Let $p$ be a simple cycle in $G$. A simple cycle passes through a vertex $\alpha_{ij}$ if and only if it passes through $\alpha_{i0}$. Every simple cycle that passes
through $\alpha_{i0}$ contains $\beta_i$ as a subpath because there is a unique arrow starting at $\alpha_{ij}$ for all $j=0,\ldots,d_i'-1$.  Hence $p$ is of the form (\ref{cycleQ'1}).
\end{proof}

\begin{lem}
There is a bijection $\Phi: Z(\widehat{\cX}) \to Z(G)$ defined by
\begin{equation}\label{cycleQ'2}
\Phi(x_{i_1} \to   \cdots \to x_{i_m} \to x_{i_1})\; := \; \beta_{i_1} \to   \cdots \to \beta_{i_m} \to \alpha_{i_10}
\end{equation}
whose inverse is
\begin{equation}\label{cycleQ(G')1}
\Phi^{-1}(\beta_{i_1} \to   \cdots \to \beta_{i_m} \to \alpha_{i_10}) := x_{i_1} \to   \cdots \to x_{i_m} \to x_{i_1}.
\end{equation}
\end{lem}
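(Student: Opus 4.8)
The plan is to leverage the preceding Proposition, which already does the essential combinatorial work: it identifies the precise form of every simple cycle in $G$ and tells us exactly when a prescribed sequence of $\beta$-paths closes up into a simple cycle. What remains is to check that $\Phi$ and $\Phi^{-1}$ are honestly well-defined maps between $Z(\widehat{\cX})$ and $Z(G)$ and that they are mutually inverse. The conceptual point, which I would isolate first, is that the arrow structure of $\widehat{\cX}$ encodes exactly the hypothesis appearing in the Proposition: since $\widehat{\cX}$ has an arrow $x_r \to x_s$ for every pair $(x_r,x_s)$ except $(x_1,x_g)$, a sequence $x_{i_1} \to \cdots \to x_{i_m} \to x_{i_1}$ with the $i_j$ pairwise distinct is a simple cycle in $\widehat{\cX}$ if and only if $(1,g) \notin \{(i_1,i_2),\ldots,(i_{m-1},i_m),(i_m,i_1)\}$ --- which is verbatim the condition under which the Proposition guarantees the existence of the cycle $\beta_{i_1} \to \cdots \to \beta_{i_m} \to \alpha_{i_10}$ in $G$.

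Granting that translation, well-definedness of $\Phi$ is immediate: given a simple cycle in $\widehat{\cX}$, its vertices $x_{i_1},\ldots,x_{i_m}$ are distinct and its arrows certify the no-$(1,g)$ condition, so the Proposition produces the corresponding simple cycle $\beta_{i_1} \to \cdots \to \beta_{i_m} \to \alpha_{i_10}$ in $G$; thus $\Phi$ maps $Z(\widehat{\cX})$ into $Z(G)$. For surjectivity I would invoke the other half of the Proposition: every element of $Z(G)$ is of the form $\beta_{i_1} \to \cdots \to \beta_{i_m} \to \alpha_{i_10}$ with the $i_j$ distinct and satisfying the same no-$(1,g)$ condition, so running the translation in reverse shows that $x_{i_1} \to \cdots \to x_{i_m} \to x_{i_1}$ is a genuine simple cycle in $\widehat{\cX}$ whose image under $\Phi$ is the given $G$-cycle. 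This simultaneously verifies that the formula defining $\Phi^{-1}$ lands in $Z(\widehat{\cX})$, and the two formulas are patently inverse to one another, since each merely replaces every $x_{i_j}$ by $\beta_{i_j}$ (appending $\alpha_{i_10}$) or conversely, while keeping the index sequence fixed.

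The one bookkeeping issue --- and the only place requiring any care --- is that simple cycles are counted ``up to choice of starting point,'' as noted in the Proposition. I would therefore check that $\Phi$ respects this equivalence: a cyclic rotation of $x_{i_1} \to \cdots \to x_{i_1}$ so as to begin at $x_{i_k}$ corresponds under $\Phi$ to the cyclic rotation of the $\beta$-cycle beginning at $\beta_{i_k}$, so $\Phi$ is constant on rotation classes and descends to a well-defined map; the same holds for $\Phi^{-1}$. Once this is observed, injectivity follows automatically from $\Phi^{-1} \circ \Phi = \id$, completing the proof. I do not anticipate any real obstacle: the Proposition has already absorbed the combinatorial difficulty, and the Lemma is essentially a repackaging of it as a bijection.
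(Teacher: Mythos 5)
Your proposal is correct and follows essentially the same route as the paper: the paper's proof likewise reduces everything to well-definedness of $\Phi$ and $\Phi^{-1}$, observing that since $\widehat{\cX}$ lacks the arrow $x_1 \to x_g$ and $G$ lacks the arrow $\alpha_{1d_1'} \to \alpha_{g0}$, the right-hand sides of (\ref{cycleQ'2}) and (\ref{cycleQ(G')1}) are genuine simple cycles, with the preceding Proposition supplying the classification of simple cycles in $G$. Your write-up just makes explicit (rotation classes, surjectivity) what the paper leaves implicit.
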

\begin{proof}
We need only check that $\Phi$ and its purported inverse are  well-defined.
Because $\widehat{\cX}$ does not contain an arrow $x_1 \to x_g$ and $G$ does not contain an arrow 
$\alpha_{1d_1'} \to \alpha_{g0}$, the right-hand sides of (\ref{cycleQ'2}) and  (\ref{cycleQ(G')1})  {\sf are} simple cycles. \end{proof}

The next result is obvious.

\begin{prop}\label{bij}
The function $\Phi$ extends to a bijection $\Phiol:\Zol(\widehat{\cX}) \to \Zol(G)$ defined by
$$
\Phiol(E_1 \sqcup \cdots \sqcup E_m) \; := \; \Phi(E_1) \sqcup \cdots \sqcup \Phi(E_m)
$$
for disjoint simple cycles $E_1,\ldots,E_m$ in $\widehat{\cX}$. 
Furthermore, $c(E) = c(\Phiol(E))$ for all $E \in\Zol(\widehat{\cX})$.
\end{prop}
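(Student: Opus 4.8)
The plan is to reduce everything to the bijection $\Phi\colon Z(\widehat{\cX}) \to Z(G)$ established in the preceding lemma, together with a single observation about how $\Phi$ acts on the underlying vertex sets. First I would attach to each simple cycle $E = (x_{i_1} \to \cdots \to x_{i_m} \to x_{i_1})$ in $\widehat{\cX}$ its index set $S(E) := \{i_1,\ldots,i_m\} \subseteq \{1,\ldots,g\}$. Reading off the definition (\ref{cycleQ'2}) of $\Phi$, the vertices of $\Phi(E)$ in $G$ are exactly the arrows $\alpha_{ij}$ with $i \in S(E)$ and $0 \le j \le d_i'$. Since these vertex sets are indexed in their first coordinate by the generator index $i$, two simple cycles $E,E'$ in $\widehat{\cX}$ share a vertex if and only if $S(E) \cap S(E') \ne \emptyset$ if and only if $\Phi(E)$ and $\Phi(E')$ share a vertex in $G$. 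This equivalence of vertex-disjointness is the engine of the whole argument.

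Given this, well-definedness of $\Phiol$ is immediate: for $E_1 \sqcup \cdots \sqcup E_m \in \Zol(\widehat{\cX})$ the cycles $E_1,\ldots,E_m$ are pairwise vertex-disjoint, so by the equivalence the simple cycles $\Phi(E_1),\ldots,\Phi(E_m)$ are pairwise vertex-disjoint in $G$, and hence their union is a genuine disjoint union of simple cycles, i.e. an element of $\Zol(G)$. For bijectivity I would exhibit the two-sided inverse explicitly. Any $Q \in \Zol(G)$ decomposes uniquely as the disjoint union of its connected components $C_1 \sqcup \cdots \sqcup C_m$, each a simple cycle of $G$; writing $C_j = \Phi(D_j)$ for the unique $D_j \in Z(\widehat{\cX})$ furnished by the lemma, the pairwise vertex-disjointness of the $C_j$ forces that of the $D_j$ by the same equivalence, so $D_1 \sqcup \cdots \sqcup D_m \in \Zol(\widehat{\cX})$ and $\Phiol(D_1 \sqcup \cdots \sqcup D_m) = Q$. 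Injectivity of $\Phiol$ follows since $\Phi$ is injective and the decomposition into connected components is canonical.

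The component equality $c(E) = c(\Phiol(E))$ is then pure bookkeeping: a disjoint union of simple cycles contributes exactly one connected component per cycle, so $c(E_1 \sqcup \cdots \sqcup E_m) = m = c(\Phi(E_1) \sqcup \cdots \sqcup \Phi(E_m))$. There is no genuine obstacle here; the only point requiring any care is that $\Phi$ neither merges nor splits connected components, and that is precisely the vertex-set equivalence of the first step. This is why the statement can fairly be called obvious once $\Phi$ is in hand.
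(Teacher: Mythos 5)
Your proof is correct. The paper offers no argument at all for this proposition---it is introduced with ``The next result is obvious''---and your write-up supplies exactly the bookkeeping that assertion leaves implicit: the vertex set of $\Phi(E)$ in $G$ is $\{\alpha_{ij} : i \in S(E)\}$, so vertex-disjointness transfers back and forth between $\widehat{\cX}$ and $G$, which gives well-definedness, bijectivity via the canonical decomposition into connected components, and the component count $c(E)=c(\Phiol(E))$ in one stroke.
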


\subsection{}

The {\sf support} of a subgraph $Q$ of $G$ is 
$$
\supp(Q):=\{x_i \; |  \; \hbox{$\beta_i$ is a path in $Q$}\}.
$$
Notice that we are using the word ``support'' in a non-standard way here; for example, although $\beta_i$ passes through $\deg(x_i)$ vertices, 
$\supp(\beta_i)=\{x_i\}$, a singleton, and $v(\beta_i)=\deg(x_i)$.  
For each non-empty subset $X \subset \set{x_1, \cdots, x_g}$ let
$$
\Zol(G,X) := \{Q \in \Zol(G) \; | \; \supp(Q)=X\}
$$
and let $d(X) = \sum_{x \in X}\deg(x)$. Thus, $v(Q)=d(X)$ for all $Q \in \Zol(G,X)$.

\begin{prop}\label{charpoly}
Let $\l = \sum_{i=1}^g \deg(x_i)$. The characteristic polynomial of $G$ is $t^{\l-d}f(t)$.
\end{prop}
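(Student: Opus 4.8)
The plan is to feed the cycle-counting formula of Theorem \ref{cycles} into the combinatorial machinery built up in the bijection $\Phiol$ and Lemma \ref{lem.char.poly}, organizing everything by the support of a disjoint union of cycles. First I would rewrite Theorem \ref{cycles} in the compact form
\[
p_G(t) \;=\; \sum_{Q \in \Zol(G)} (-1)^{c(Q)}\, t^{\l - v(Q)},
\]
where the empty subgraph ($v=0$, $c=0$) contributes the leading term $t^\l$. Since every $Q \in \Zol(G)$ has a well-defined support $\supp(Q) \subseteq \cX$, and since $v(Q) = d(X)$ whenever $\supp(Q) = X$, I would partition the sum according to support:
\[
p_G(t) \;=\; t^\l \;+\; \sum_{\emptyset \ne X \subseteq \cX} t^{\l - d(X)} \sum_{Q \in \Zol(G,X)} (-1)^{c(Q)}.
\]

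The key step is to evaluate the inner sum $\sum_{Q \in \Zol(G,X)} (-1)^{c(Q)}$ for each fixed non-empty $X$. Here I would invoke the bijection $\Phiol : \Zol(\widehat{\cX}) \to \Zol(G)$ of Proposition \ref{bij}, which preserves the number of connected components and, by its very definition on simple cycles, sends a disjoint cycle union with vertex set $X$ to one with support $X$. Thus $\Phiol$ restricts to a $c$-preserving bijection between $\Zol(G,X)$ and the set of disjoint unions of simple cycles in $\widehat{X}$ that cover every vertex of $X$. By Remark \ref{rem.for.referee} (the $v(Q)=v(G)$ case of Theorem \ref{cycles} applied to $\widehat{X}$), that latter signed count is exactly the constant term $p_{\widehat{X}}(0)$. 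Lemma \ref{lem.char.poly} then evaluates it: the inner sum is $1$ if $X = \{x_1,x_g\}$, $-1$ if $|X|=1$, and $0$ otherwise.

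Substituting these values collapses the double sum to
\[
p_G(t) \;=\; t^\l \;-\; \sum_{i=1}^g t^{\l - d_i} \;+\; t^{\l - d},
\]
the three surviving terms coming respectively from $X = \emptyset$, the singletons $X = \{x_i\}$, and $X = \{x_1,x_g\}$ (using $d(\{x_1,x_g\}) = d_1+d_g = d$). Factoring out $t^{\l-d}$ gives $t^{\l-d}\big(t^d - \sum_{i} t^{d-d_i} + 1\big)$, and the symmetry hypothesis $d_i + d_{g+1-i} = d$ shows $d - d_i = d_{g+1-i}$, so $\sum_i t^{d-d_i} = \sum_i t^{d_i}$; hence the parenthesized factor is exactly $f(t)$ and $p_G(t) = t^{\l-d}f(t)$, as claimed. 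I expect the main obstacle to be the bookkeeping in the key step: correctly matching the support of a cycle union in $G$ with the vertex set of its preimage under $\Phiol$, and recognizing that ``covers every vertex of $\widehat{X}$'' is precisely the $v(Q) = v(\widehat{X})$ condition that isolates the constant term $p_{\widehat{X}}(0)$ — together with the careful separate treatment of the empty support to produce the leading term.
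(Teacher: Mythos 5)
Your proposal is correct and follows essentially the same route as the paper's proof: both partition the signed cycle-count of Theorem \ref{cycles} by support, use the bijection $\Phiol$ of Proposition \ref{bij} together with Remark \ref{rem.for.referee} to identify each inner sum with the constant term $p_{\widehat{X}}(0)$, and then apply Lemma \ref{lem.char.poly}. If anything, you are slightly more explicit than the paper in the last step, where the symmetry $d_i + d_{g+1-i} = d$ is needed to convert $\sum_i t^{d-d_i}$ into $\sum_i t^{d_i}$.
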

\begin{proof}		
The characteristic polynomial of $G$ is  
$$
p_{G}(t) = t^\l + c_1 t^{\l-1} + \cdots + c_{\l-1}t + c_\l
$$
where $\l = v(G)=\sum_{i=1}^g d_i$ and 
\begin{equation}
\label{orig.sum}
c_i \; = \; \sum_{\begin{subarray} {c} 
 			Q \in \Zol(G)  \\
			v(Q)=i
      	          \end{subarray}}   (-1)^{c(Q)}
\; = \; \sum_{\begin{subarray} {l} 
 					X \subset \cX \\
					d(X)=i
      				\end{subarray}} 
				\left(\sum_{Q \in \Zol(G,X)} (-1)^{c(Q)}\right).	          
\end{equation}	          
Since $\Zol(G,X) = \{\ol{\Phi}(E) \; | \; E \in \Zol(\widehat{X}) \; \hbox{and}  \; v(E)=|X|\}$ we have
\begin{equation}
\label{change.sum}
 \sum_{Q \in \Zol(G,X)} (-1)^{c(Q)} \; \; = \; \; \sum_{\begin{subarray} {c} 
 					E \in \Zol(\widehat{X})  \\
					v(E)=|X|
      				\end{subarray}}    (-1)^{c(E)}.
\end{equation}
Since $\widehat{X}$ is the full subgraph of $\widehat{\cX}$ with vertex set $X$, $v(\widehat{X})=|X|$.
It now follows from Remark \ref{rem.for.referee} that  the right-hand side of (\ref{change.sum}) is $p_{\widehat{X}}(0)$. 
Hence by Lemma \ref{lem.char.poly}, $$
c_i = 
\begin{cases}
1 & \text{if $i = d_1 + d_g = d$,} \\
-n_i & \text{if $1 \leq i \leq d_g$,} \\
0 & \text{otherwise}.
\end{cases}
$$
Thus $p_{G}(t) = t^\l - n_1 t^{\l-1} - \cdots - n_{d - 1}t^{\l-d+1} + t^{\l-d}  = t^{\l-d}f(t)$, as claimed.
\end{proof}

As explained  at the end of \S\ref{ssect.strategy}, 
Proposition \ref{minimal} follows from Proposition \ref{charpoly} and Corollary \ref{cor.PF}.

\subsubsection{Example}
\label{eg.234}
In order to clarify some of the technicalities in this section, we will compute the coefficient $c_5$ in
$p_G(t) = t^9 + c_1t^8 + \cdots + c_8t + c_9$ where $G$ is the  second graph associated to the algebra
$A=k\langle x_1,x_2,x_3\rangle/(b)$ where $\deg(x_i) = i+1$. First, $G$ is
$$
\xymatrix{
&\alpha_{10} \ar@/^5pt/[r]  & \alpha_{11} \ar@/^5pt/[l] \ar@/^10pt/[rdd] 
\\ 
\\
\alpha_{33} \ar[r] \ar@/^10pt/[uur] \ar@/^16pt/[rrr] & \alpha_{30} \ar[d]  && \alpha_{20}\ar[d] & \\
\alpha_{32}  \ar[u] & \alpha_{31} \ar[l]  &\alpha_{22} \ar[ur] \ar[ul] \ar[luuu] &\alpha_{21} \ar[l]  \\
}
$$
There are two subgraphs of $G$ that have exactly five vertices and are disjoint unions of simple cycles, namely
$$
Q_1 =\begin{cases}
 \xymatrix{
\alpha_{10} \ar@/^5pt/[r]  & \alpha_{11} \ar@/^5pt/[l]
}
\\
\\ 
 \xymatrix{
& \alpha_{20} \ar[d] 
\\
 \alpha_{22} \ar[ur] & \alpha_{21}  \ar[l] 
}
\end{cases}
\text{ and } \quad
Q_2 =
\begin{cases}
 \xymatrix{
 \alpha_{10} \ar@/^5pt/[r]  & \alpha_{11}  \ar@/^10pt/[rdd] \\ 
\\
 & &  \alpha_{20}  \ar[d] \\
 & \alpha_{22}  \ar[luuu] &\alpha_{21} \ar[l] 
}
\end{cases}
$$
The only subset $X$ of $\cX = \set{x_1, x_2, x_3}$ such that $d(X) = 5$ is $X = \set{x_1,x_2}$. The graph $\widehat{\cX}$ is
$$
 \xymatrix@R=30pt@C=30pt{
& x_1 \ar@(ur,ul)[] \ar@/^5pt/[dr]  & \\
x_3 \ar@/^5pt/[ur] \ar@/^5pt/[rr] \ar@(ul,dl)[] & & x_2 \ar@/^5pt/[ul] \ar@/^5pt/[ll] \ar@(ur,dr)[]}
$$

Since $Q_1 = \ol{\Phi}(E_1)$ and $Q_2 = \ol{\Phi}(E_2)$ where 

$$
E_1 =
\begin{cases}
 \xymatrix@R=30pt@C=30pt{
x_1 \ar@(ur,dr)[] & \\
x_2 \ar@(ur,dr)[]}
\end{cases}
\quad \text{ and } \quad
E_2 = 
\begin{cases}
\xymatrix@R=30pt@C=30pt{
x_1 \ar@/^5pt/[dr]  & \\
 & x_2 \ar@/^5pt/[ul] }
\end{cases}
$$
equations (\ref{orig.sum}) and (\ref{change.sum}) give
\begin{align*}
c_5 &= (-1)^{c(Q_1)} + (-1)^{c(Q_2)}\\
&= (-1)^{c(E_1)} + (-1)^{c(E_2)}\\
&= 1 - 1\\
&= 0.
\end{align*}

\section{Second proof of  Proposition \ref{minimal}}\label{proof2}
The following proof of Proposition \ref{minimal} is made under the additional assumption that $g \ge 4$.

  \begin{lem}
 \label{lem.highschool4}
Let $\eta,\phi,\psi \in \bR$. The following inequalities hold:
 \begin{enumerate}
  \item 
  $3+\cos(2\eta)-\cos(\eta+\psi)-\cos(\eta-\psi)-\cos(\eta+\phi)-\cos(\eta-\phi)  \; \ge \; 0$;
  \item 
 $ 3+\cos(2\eta)-\cos(\eta+\psi)- 2\cos(\eta)- \cos(\eta-\psi)  \; \ge \; 0$;
  \item 
 $3+\cos(2\eta)- 2\cos(\eta+\psi)-2\cos(\eta-\psi)  \; \ge \; 0$.
\end{enumerate}
In each case the inequality is an equality if  and only if 
\begin{enumerate}
  \item 
  $\cos(\eta)=\cos(\phi)=\cos(\psi)=1$ or $\cos(\eta)=\cos(\phi)=\cos(\psi)=-1$;
  \item 
  $\cos(\eta)=\cos(\psi)=1$;
  \item 
  $\cos(\eta)=\cos(\psi)=1$ or $\cos(\eta)=\cos(\psi)=-1$.
\end{enumerate}  
 \end{lem}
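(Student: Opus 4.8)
The plan is to reduce all three inequalities to a single elementary estimate in one real variable. Writing $c:=\cos\eta$, the double-angle identity $\cos(2\eta)=2c^2-1$ together with the sum-to-product identities
$$
\cos(\eta+\psi)+\cos(\eta-\psi)=2c\cos\psi, \qquad \cos(\eta+\phi)+\cos(\eta-\phi)=2c\cos\phi
$$
turns the left-hand side of (1) into $2\big(1+c^2-c\cos\psi-c\cos\phi\big)$; the same manipulations, using $2\cos\eta=\cos(\eta+0)+\cos(\eta-0)$, turn (2) into $2\big(1+c^2-c\cos\psi-c\big)$ and (3) into $2\big(1+c^2-2c\cos\psi\big)$. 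Setting $u=\cos\psi$, $v=\cos\phi$ in case (1), $u=\cos\psi$, $v=1$ in case (2), and $u=v=\cos\psi$ in case (3), every one of the three inequalities becomes equivalent to the master inequality
$$
1+c^2-c(u+v)\;\ge\;0, \qquad c,u,v\in[-1,1].
$$

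I would then prove the master inequality by the chain of estimates
$$
c(u+v)\;\le\;|c|\,|u+v|\;\le\;|c|\big(|u|+|v|\big)\;\le\;2|c|,
$$
which yields $1+c^2-c(u+v)\ge 1+c^2-2|c|=(1-|c|)^2\ge 0$. This is the entire content of the positivity assertion, and it is routine; the only mild discipline required is to keep the three successive estimates separate, so that the equality analysis can read off exactly when each one is sharp.

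For the equality statements I would trace back through the chain. Equality in the master inequality forces simultaneously $(1-|c|)^2=0$, i.e.\ $|c|=1$; the triangle equality $|u+v|=|u|+|v|$, i.e.\ $u$ and $v$ do not have opposite signs; the norm conditions $|u|=|v|=1$; and $c(u+v)=|c|\,|u+v|$, i.e.\ $c(u+v)\ge 0$. Together these force $(c,u,v)=(1,1,1)$ or $(-1,-1,-1)$. Specializing: case (1) gives $\cos\eta=\cos\psi=\cos\phi=1$ or all three equal to $-1$; case (2), where $v=1$ is frozen, rules out the all-$(-1)$ solution and leaves only $\cos\eta=\cos\psi=1$; case (3), where $u=v=\cos\psi$, leaves $\cos\eta=\cos\psi=1$ or $\cos\eta=\cos\psi=-1$. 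These are exactly the stated equality conditions.

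The argument is short and I foresee no real obstacle; the one place demanding care is the equality analysis, where it is tempting to collapse the inequalities and lose track of which specialization ($v=1$ versus $u=v$) kills which of the two extremal triples. Keeping the master inequality stated for three independent variables $c,u,v$ and specializing only at the very end is precisely what makes the three distinct equality cases fall out cleanly.
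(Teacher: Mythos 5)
Your proof is correct and takes essentially the same route as the paper: both reduce everything to the inequality $1+\cos^2\eta-\cos\eta\,(\cos\psi+\cos\phi)\ge 0$ via the double-angle and product-to-sum identities, and obtain (2) and (3) from (1) by the specializations $\phi=0$ and $\phi=\psi$. The only cosmetic difference is the last algebraic step, where the paper writes the master expression as the sum of non-negative terms $(1-\lambda^2)+(\cos\eta-\lambda)^2$ with $\lambda=\tfrac{1}{2}(\cos\psi+\cos\phi)$, while you bound $\cos\eta\,(\cos\psi+\cos\phi)\le 2|\cos\eta|$ and invoke $(1-|\cos\eta|)^2\ge 0$; both yield the same equality analysis.
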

 \begin{proof}
 The expression in (2) is obtained from that in (1) by taking $\phi=0$. 
 The expression in (3) is obtained from that in (1) by taking $\psi=\phi$. 
 The statements about equality in (2) and (3) follow from that about equality in (1).
 It therefore suffices to prove (1).

 Let $\lambda = \frac{1}{2}(\cos(\phi)+\cos(\psi))$. Since $-1 \le \lambda \le 1$, 
 $$
1+ ( \cos(\eta)-\lambda)^2 - \lambda^2    \; \ge \; 0.
  $$
  Therefore
\begin{align*}
 0 & \; \le \; 2+ 2( \cos(\eta)-\lambda)^2 -2 \lambda^2 
 \\
 &\;  = \;  2+ 2\cos^2(\eta)-4\lambda\cos(\eta).
\\
  & \; = \; 2+2\cos^2(\eta) -2\cos(\phi)\cos(\eta) - 2\cos(\psi)\cos(\eta)   
  \\
   & \; = \;  3+\cos(2\eta)-\cos(\eta+\phi)-\cos(\eta-\phi) -\cos(\eta+\psi)-\cos(\eta-\psi) .
\end{align*}
This proves the  inequality  in (1). The  inequality in (1) is an equality if and only if
$\lambda^2=1$ and $\cos(\eta)=\lambda$, i.e., if and only if either $\cos(\eta)=\lambda=1$ or $\cos(\eta)=\lambda=-1$; i.e., if and only if 
$\cos(\eta)=\cos(\phi)=\cos(\psi)=1$ or $\cos(\eta)=\cos(\phi)=\cos(\psi)=-1$.
\end{proof}

As always, $f(t)$ is the polynomial defined in (\ref{defn.f3}) or, equivalently, the polynomial in (\ref{defn.f2}).

\begin{prop}\label{equalmodulus}
Suppose $g \geq 4$. If $f(\lambda) = 0$ and $|\lambda| = \theta^{-1}$ then $\lambda = \theta^{-1}$.
\end{prop}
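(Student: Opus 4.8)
The plan is to write $\lambda=\rho e^{i\omega}$ with $\rho:=\theta^{-1}=|\lambda|$ and $\omega\in(-\pi,\pi]$, and to show that the hypotheses force $\omega=0$. The key idea is to exploit not only $\lambda$ but also its reciprocal: since the degrees satisfy $d_i+d_{g+1-i}=d$ we have $n_j=n_{d-j}$, so $f(t)=t^df(1/t)$ (as recorded in \S\ref{ssect.strategy}); hence $f(1/\lambda)=\lambda^{-d}f(\lambda)=0$ and $1/\lambda=\theta e^{-i\omega}$ is also a root, of modulus $\theta<1$. Feeding in both roots lets me use the real \emph{and} the imaginary part of $f(\lambda)=0$ at once, while keeping everything real.

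First I would take real parts. From $\lambda^d+1=\sum_{i=1}^g\lambda^{d_i}$ and the real-root identity $\rho^d+1=\sum_i\rho^{d_i}$, subtraction gives
\[
\rho^{d}\bigl(1-\cos(d\omega)\bigr)=\sum_{i=1}^{g}\rho^{d_i}\bigl(1-\cos(d_i\omega)\bigr).
\]
Doing the same with the root $1/\lambda=\theta e^{-i\omega}$ and the real-root identity for the other positive real root $\theta$ (see \S\ref{sect.roots}) gives
\[
\theta^{d}\bigl(1-\cos(d\omega)\bigr)=\sum_{i=1}^{g}\theta^{d_i}\bigl(1-\cos(d_i\omega)\bigr).
\]
Now I divide the first identity by $\rho^{d}$ and the second by $\theta^{d}$. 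Using $d_i-d=-d_{g+1-i}$ and $\theta=\rho^{-1}$, the two right-hand sides become $\sum_i\rho^{-d_{g+1-i}}\bigl(1-\cos(d_i\omega)\bigr)$ and $\sum_i\rho^{\,d_{g+1-i}}\bigl(1-\cos(d_i\omega)\bigr)$, while both left-hand sides equal $1-\cos(d\omega)$. Subtracting the two cancels the common left-hand side and yields
\[
\sum_{i=1}^{g}\bigl(\rho^{\,d_{g+1-i}}-\rho^{-d_{g+1-i}}\bigr)\bigl(1-\cos(d_i\omega)\bigr)=0.
\]

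Here every coefficient $\rho^{\,d_{g+1-i}}-\rho^{-d_{g+1-i}}$ is strictly positive, because $\rho=\theta^{-1}>1$ and the degrees are positive (equivalently $f(0)>0$ forces $n_0=n_d=0$, so $1\le d_i\le d-1$), and every factor $1-\cos(d_i\omega)$ is $\ge 0$. Hence each summand vanishes, so $\cos(d_i\omega)=1$, i.e. $d_i\omega\in 2\pi\bZ$, for every $i$. Since $\gcd\{d_1,\dots,d_g\}=1$ I can choose integers $c_i$ with $\sum_i c_id_i=1$, whence $\omega=\sum_i c_i(d_i\omega)\in 2\pi\bZ$ and therefore $\lambda=\rho=\theta^{-1}$, as required.

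The part I expect to carry the real content is producing a combination with a \emph{definite} sign. A single scalar equation (the real part of $f(\lambda)=0$, or the modulus identity $|\lambda^d+1|=|\sum_i\lambda^{d_i}|$) only yields relations both of whose sides are nonnegative but are not comparable, and so cannot by itself pin down $\omega$; the oscillation of the cosines is exactly the obstruction. The trick that resolves this is to normalize the two reciprocal relations by their respective top-degree terms and subtract, which eliminates $1-\cos(d\omega)$ and leaves a sum of manifestly nonnegative terms — this is precisely where invoking $1/\lambda$ (equivalently, the imaginary part of $f(\lambda)=0$) is essential. I note that this route avoids Lemma \ref{lem.highschool4} and the hypothesis $g\ge 4$ altogether, needing only $\rho>1$, the reciprocal symmetry of $f$, and $\gcd\{d_i\}=1$; the degenerate possibility that all $d_i$ coincide (which forces $d=2$) is harmless, since then $f$ has no non-real roots at all.
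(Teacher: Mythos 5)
Your proof is correct, and it takes a genuinely different — and in fact stronger — route than the paper's. The paper works with only one pair of roots, $\lambda$ and $\theta^{-1}$: writing $\omega=(\lambda\theta)^{-1}=\cos\alpha+i\sin\alpha$, it multiplies $f(\lambda)=0$ by $\omega^d$, subtracts $f(\theta^{-1})=0$, and takes real parts to get the single equation $Z=0$ of (\ref{eq.real.part2}). In that equation the term $\cos(d\alpha)-1$ is $\le 0$ while every other term is $\ge 0$, so the paper must show that $1-\cos(d\alpha)$ can be dominated by a sub-sum of the nonnegative terms; that is exactly the role of Lemma \ref{lem.highschool4} and the case analysis (a)/(b)/(c), and it is the only place $g\ge 4$ enters — genuinely so, since for $g=3$ with degrees $\{1,2,3\}$ none of (a), (b), (c) holds. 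Your extra ingredient is the second pair of roots: reciprocity $f(t)=t^df(1/t)$ makes $1/\lambda$ (of modulus $\theta$) a root as well, and normalizing the two real-part identities by their top-degree terms $\rho^d$ and $\theta^d$ and subtracting cancels the indefinite term $1-\cos(d\omega)$ identically, leaving
$$
\sum_{i=1}^{g}\bigl(\rho^{\,d-d_i}-\rho^{-(d-d_i)}\bigr)\bigl(1-\cos(d_i\omega)\bigr)=0
$$
with strictly positive coefficients, after which your gcd step coincides with the paper's endgame ($\cos(d_i\omega)=1$ for all $i$ plus $\gcd\{d_i\}=1$ forces $\lambda=\theta^{-1}$). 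What your route buys: no trigonometric inequalities, no case analysis, and validity for all $g\ge 3$ (only $\theta^{-1}>1$ is used, which holds since $f(1)=2-g<0$), so it removes the restriction announced at the start of \S\ref{proof2} and makes the second proof of Proposition \ref{minimal} as general as the first. Two cosmetic points, neither affecting correctness: your parenthetical justification ``$f(0)>0$ forces $n_0=n_d=0$'' has the logic backwards — the bounds $1\le d_i\le d-1$ are part of the setup, since $d_i=\deg(x_i)\ge 1$ and $d_i+d_{g+1-i}=d$; and beware the notational clash with the paper, whose $\omega$ denotes the unit scalar $(\lambda\theta)^{-1}$ rather than $\arg\lambda$.
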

\begin{proof}
We recall some notation. We write $D=\{\!\{d_1,\ldots,d_g\}\!\}$ for the multi-set of degrees of the generators $x_1,\ldots,x_g$.
The degree of $x_i$ is denoted by $d_i\ge 1$ and there is an integer $d \ge 2$ such that $d_i+d_{g-i+1}=d$ for all $i=1,\ldots,g$. 
We write $n_j$ for the number of $x_i$ having degree $j$. We have also made the assumption that $\gcd(D)=1$. 

Suppose $d=2$. Then $\deg(x_i)=1$ for all $i$ so $f(t)=t^2-gt+1$ with $g \ge 4$. The result is true because the 
only roots of $f$ are $\theta$  and $\theta^{-1}$. 

For the remainder of the proof we assume that $d \geq 3$. 
Therefore $\deg(x_i)\ge 2$ for some $i$. Let $\omega=(\lambda\theta)^{-1}=\cos(\alpha)+i\sin(\alpha)$ and define
\begin{equation}
\label{eq.real.part2}
Z:=\cos(d\alpha)-1 + \sum_{j=1}^{d-1}n_j\theta^{-j} (1-\cos((d-j)\alpha)).
\end{equation}
Since $f(\theta^{-1})=f(\omega^{-1}\theta^{-1})=0$,
\begin{align*}
0 & = \theta^{-d} + 1 - \sum_{j=1}^{d-1}n_j\theta^{-j}  
\\
&= (\omega\theta)^{-d} + 1 -\sum_{j=1}^{d-1}n_j(\omega\theta)^{-j}
\\
& = \theta^{-d}+\omega^{d} -\sum_{j=1}^{d-1}n_j\theta^{-j}\omega^{d-j}. 
\end{align*}
Therefore 
\begin{equation}
\label{eq.real.part1}
0=\omega^d - 1 + \sum_{j=1}^{d-1}n_j\theta^{-j}(1-\omega^{d-j}).
\end{equation}
The real part of (\ref{eq.real.part1}) is $Z$, so $Z=0$. 

We will use the fact that $Z=0$ to show that $\cos(j\alpha)=1$ for all  $j \in D$; i.e., $\omega^j=1$ for all  $j \in D$. 
But $\gcd D = 1$ so it will then follow that $\omega=1$, whence $\lambda=\theta^{-1}$. 
The proof of the proposition will then be complete.  

\underline{Claim:}
One of the following is true:
 \begin{enumerate}
  \item[(a)]
 there is $q \in D$ such that $n_q \ge 2$ and $q < d/2$;
 \item[(b)]
 (a) does not occur, $d$ is even, $n_{d/2} \ge 2$, and $n_q \ne 0$ for some $q < d/2$;
 \item[(c)] 
neither (a) nor (b) occurs, and 
there are four distinct elements $q,r,d-q,d-r \in D$ such that $q,r<d/2$. 
\end{enumerate}

\underline{Proof of Claim:}
Suppose there is an integer $r \ne d/2$ such that two different $x_i$'s have degree $r$.
Then there are two different $x_i$'s having degree $d-r$. Since either $r$ or $d-r$ is $<d/2$, (a) holds in this case.
Conversely, if (a) holds there is an integer $r \ne d/2$ such that two different $x_i$'s have degree $r$.

Suppose (a) does not occur. Then either no two $x_i$'s have the same degree or $d$ is even and there are two different 
$x_i$'s having degree $d/2$. In the latter case, $n_{d/2} \ge 2$ and, since $\gcd(D)=1$, there is some $x_i$ whose degree is not
$d/2$, and hence an $x_j$ whose degree is $d-d_i$; either $d_i$ or $d-d_i$ is $<d/2$ so (b) holds. In the former case, since $g \ge 4$, 
(c) holds. $\lozenge$

In the three cases in the claim we make the following definitions:
\begin{enumerate}
  \item[(a)] 
  $C=\{q,d-q\}$, 
  \newline
  $n'_q=n'_{d-q}=2$, and 
  \newline
$
X= 3+\cos(d\alpha) - 2 \cos(q\alpha) - 2\cos((d-q)\alpha)
$
  \item[(b)]  
  $C=\{q,d-q,d/2\}$, 
  \newline
  $n'_q=n'_{d-q}=1$, $n'_{d/2}=2$,  and 
\newline
$X=3+\cos(d\alpha) -  \cos(q\alpha) - 2\cos(d\alpha/2) - \cos((d-q)\alpha) 
$
  \item[(c)]
  $C=\{q,r,d-q,d-r\}$, 
  \newline
  $n'_q=n'_r=n'_{d-q}=n'_{d-r}=1$, and 
  \newline
$
X:=3+\cos(d\alpha) -  \cos(q\alpha) - \cos(r\alpha) - \cos((d-q)\alpha) - \cos((d-r)\alpha)
$.
\end{enumerate}
Let $q$ and $r$ be as in (a), (b), and (c). Define $\eta=d\alpha/2$, $\psi=
(d/2-q)\alpha$, and $\phi=(d/2-r)\alpha$. In case (a), Lemma \ref{lem.highschool4}(3) implies $X \ge 0$.
In case (b), Lemma \ref{lem.highschool4}(2) implies $X \ge 0$.
In case (c), Lemma \ref{lem.highschool4}(1) implies $X \ge 0$.

In each of the three cases, 
$$
X=\cos(d\alpha)-1+\sum_{j\in C}n'_j(1-\cos((d-j)\alpha)).
$$
Therefore, $-X=Z-X=$  
$$
  \sum_{j \in D-C}n_j\theta^{-j} (1-\cos((d-j)\alpha)) + \sum_{j \in C}(n_j\theta^{-j}-n'_j) (1-\cos((d-j)\alpha)).
$$
However, 
\begin{itemize}
  \item 
  $n_j\theta^{-j}>0$ for all $j \in D-C$,
  \item 
  $n_j\theta^{-j}-n'_j>0$ for all $j \in C$, and
  \item 
  $1-\cos((d-j)\alpha)$ is always $\ge 0$,
\end{itemize} 
so $-X \ge 0$ and $-X=0$ if and only if $\cos(j\alpha)=1$ for all $j \in D$. 
Since both $-X$ and $X$ are $\ge0$ we conclude that $X=0$, whence $\cos(j\alpha)=1$ for all $j \in D$. The proof is now complete.
\end{proof}

\begin{lem}
Suppose $g \geq 4$. If $\lambda$ is a root of $f$ other than $\theta$, then $\theta < |\lambda|$.
\end{lem}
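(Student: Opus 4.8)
The plan is to deduce this lemma from Proposition \ref{equalmodulus} by exploiting the self-reciprocity of $f$ recorded in \S\ref{ssect.strategy}: since the coefficient of $t^i$ in $f$ equals that of $t^{d-i}$, we have $f(t) = t^d f(t^{-1})$. Because $f(0) = 1 \neq 0$, the value $0$ is not a root, so for every root $\lambda$ the identity $f(t) = t^d f(t^{-1})$ shows that $\lambda^{-1}$ is also a root of $f$. Thus the finite set of roots of $f$ is closed under the involution $\lambda \mapsto \lambda^{-1}$, and this is the only structural fact about $f$ I will need beyond Proposition \ref{equalmodulus}.

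Next I would record that $\theta^{-1}$ is the unique root of maximal modulus. Proposition \ref{equalmodulus} (whose hypothesis $g \ge 4$ is exactly the hypothesis of the present lemma) says precisely that any root $\lambda$ with $|\lambda| = \theta^{-1}$ must equal $\theta^{-1}$, and from \S\ref{sect.roots} the positive real root satisfies $\theta^{-1} > 1 > \theta$, so $\theta^{-1}$ genuinely occurs among the roots. Applying the reciprocal symmetry, for any root $\lambda$ the value $|\lambda^{-1}| = |\lambda|^{-1}$ cannot exceed the maximal modulus $\theta^{-1}$, which rearranges to $|\lambda| \ge \theta$ for every root $\lambda$. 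In other words, $\theta$ is a root of minimal modulus.

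It then remains only to upgrade the bound $|\lambda| \ge \theta$ to a strict inequality when $\lambda \neq \theta$. Suppose $\lambda$ is a root with $\lambda \neq \theta$; by the previous paragraph $|\lambda| \ge \theta$, so I must rule out the possibility $|\lambda| = \theta$. If $|\lambda| = \theta$, then $\lambda^{-1}$ is a root with $|\lambda^{-1}| = \theta^{-1}$, and Proposition \ref{equalmodulus} forces $\lambda^{-1} = \theta^{-1}$, i.e. $\lambda = \theta$, contradicting $\lambda \neq \theta$. Hence $|\lambda| > \theta$, as claimed.

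The point to emphasize is that essentially all of the analytic work has already been carried out in Proposition \ref{equalmodulus}; this lemma is a formal consequence of that proposition together with the palindromic symmetry $f(t) = t^d f(t^{-1})$. I do not anticipate any genuine obstacle beyond bookkeeping the reciprocal correspondence of roots and the harmless observation that $0$ is excluded as a root, so that inversion is everywhere defined on the root set.
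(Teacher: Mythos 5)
Your final step --- upgrading $|\lambda| \ge \theta$ to a strict inequality via the reciprocity $f(t) = t^d f(t^{-1})$ and Proposition \ref{equalmodulus} --- is exactly the paper's argument, and it is fine. The gap is in your first step. Proposition \ref{equalmodulus} is a uniqueness statement \emph{at} the modulus $\theta^{-1}$: it says that a root whose modulus happens to equal $\theta^{-1}$ must be $\theta^{-1}$ itself. It says nothing about whether roots of modulus strictly larger than $\theta^{-1}$ exist, so it does not justify your assertion that ``$\theta^{-1}$ is the unique root of \emph{maximal} modulus.'' Worse, under the inversion symmetry you set up, the missing statement ``every root has modulus at most $\theta^{-1}$'' is equivalent to ``every root has modulus at least $\theta$,'' which is precisely the (non-strict form of the) conclusion you are trying to reach; so your second paragraph assumes the substance of the lemma. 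Within the paper's own logic, the fact that $\theta^{-1}$ is the unique root of maximal modulus is the content of Proposition \ref{minimal}, i.e.\ it is what this lemma and Proposition \ref{equalmodulus} are together meant to establish in \S\ref{proof2}; it is not available as an input.

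What is missing is an independent argument bounding the moduli of the roots below by $\theta$, and this is where the paper does genuine analytic work. Since $1/f(t) = H_A(t) = \sum_{i=0}^\infty a_i t^i$ has non-negative coefficients, Pringsheim's theorem \cite[Theorem IV.6]{ancomb} applies: the radius of convergence $R$ of this series --- which equals the minimal modulus of the roots of $f$ --- is itself a singularity of $H_A$, hence a positive real root of $f$. Since $\theta$ is a root, $R \le \theta$; since $\theta$ is the smallest positive real root, $R \ge \theta$; hence $R = \theta$ and every root $\lambda$ satisfies $|\lambda| \ge \theta$. Some input of this kind (or a Perron--Frobenius argument as in \S\ref{proof1}) is genuinely needed: a naive triangle-inequality estimate applied to $f(\lambda)=0$ does not exclude roots of modulus greater than $\theta^{-1}$. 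With this first half restored, your concluding contradiction argument finishes the proof exactly as in the paper.
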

\begin{proof}
Let $\rho$ be a  root of $f$ having minimal modulus. Because $f(t)$ is the reciprocal of a Hilbert series, the power series 
$$
\frac{1}{f(t)} = \sum_{i = 0}^\infty a_i t^i
$$
has non-negative coefficients and a finite radius of convergence $|\rho|$. By Pringsheim's Theorem \cite[Theorem IV.6]{ancomb}, $|\rho|$ is a singularity of $H_A(t)$, i.e. a root of $f$. Since $\theta$ is the smallest positive root of $f$, $|\rho| = \theta$, so $\theta \leq |\lambda|$.

Suppose $|\lambda| = \theta$. Since $f$ is reciprocal, $\lambda^{-1}$ is a root of $f$, and $|\lambda^{-1}| = \theta^{-1}$. By Proposition \ref{equalmodulus}, $\lambda^{-1} = \theta^{-1}$, so $\lambda = \theta$, a contradiction. The result follows.
\end{proof}

\section{Examples}

\subsection{}
When $A$ is generated by $g \ge 3$ elements of degree one, $f(t)$ is the irreducible polynomial 
$1 - gt + t^2$ so
$$
K_0(\qgr(A)) \;  \cong  \; \bZ\left[\frac{g - \sqrt{g^2 - 4}}{2}\right] \; \subset \; \bR
$$
as ordered abelian groups.

\subsection{Non-irreducible $f$}
Suppose $g = 4$, $d_1 = d_2 = 1$ and $d_3 = d_4 = 2$. Then $f(t)=  1 - 2t - 2t^2 + t^3 = (1 + t)(1 - 3t + t^2)$ and 
$\theta = \frac{1}{2}(3 - \sqrt{5})$. 
The map
$$
 \frac{\bZ[t,t^{-1}]}{(f)} \to \bZ \oplus \bZ[\theta], \quad \ol{p} \mapsto (p(-1), p(\theta))
$$
is an isomorphism of abelian groups. The image of the positive cone under that isomorphism $K_0(\qgr(A)) \longrightarrow  \bZ \oplus \bZ[\theta]$ is $(\bZ \oplus \bZ[\theta]_{\geq 0}) \cup \set{0}$.


\begin{thebibliography}{12}



 

\bibitem{graph} D. Cvetkovi\'c, M. Doob and H. Sachs, {\it Spectra of graphs. Theory and application,}  Pure and Applied Mathematics, 87. Academic Press, Inc. [Harcourt Brace Jovanovich, Publishers], New York-London, 1980.
MR0572262 (81i:05054)
 

\bibitem{ancomb} P. Flajolet and R. Sedgewick, {\it Analytic combinatorics.} Cambridge University Press, Cambridge, 2009.
MR2483235 (2010h:05005)

\bibitem{G}
F.R. Gantmacher, {\it Matrix Theory,} vol. II, Chelsea Publishing Co., New York, 1960.
MR0869996 (87k:15001)

\bibitem{EH}
E. Hironaka,  What is $\ldots$ Lehmer's number? {\it Notices Amer. Math. Soc.,} {\bf 56} (2009) 374-375. 

\bibitem{holdsis} C. Holdaway and G. Sisodia, Category equivalences involving graded modules over weighted path algebras and monomial algebras, {\it J. Algebra,} {\bf 353} (2012) 249--260. 
MR2872446 (2012k:16035)
 
 

\bibitem{KR}
M. Kontsevich and A. Rosenberg, Noncommutative smooth spaces, in {\it The Gelfand Seminars} 1996-1999,
pp. 85-108, Gelfand Math Sem., Birkha\"ser Boston, Boston, MA, 2000.
MR1731635 (2001c:14005)

\bibitem{LM}
D. Lind and B. Marcus, {\it An Introduction to Symbolic Dynamics and Coding,} 
Camb. Univ. Press, Cambridge, 1995.
MR1369092 (97a:58050)

\bibitem{M}
H. Minamoto,  
A noncommutative version of Beilinson's theorem, {\it J. Algebra,} {\bf 320} (2008) 238-252. 
MR2417986 (2009i:16027) 


\bibitem{mosmi} 
I. Mori and S.P. Smith, {B\'ezout's theorem for noncommutative projective spaces,}
{\it J. Pure Appl. Math.,} {\bf 157} (2001), 279299.
MR1812056 (2001m:16075)

\bibitem{MS2} 
I. Mori and S.P. Smith, 
The Grothendieck group of a quantum projective space bundle, {\it K-Theory,} {\bf 37} (2006) 263-289.
MR2273460 (2007k:14012)

\bibitem{PV}
M. Pimsner and D.Voiculescu,
Imbedding the irrational rotation $C^*$-algebra into an AF-algebra,
{\it J. Operator Theory,} {\bf 4} (1980) 201--210. 
MR0595412 (82d:46086)  

\bibitem{piont} D. Piontkovski, {Coherent algebras and noncommutative projective lines}, 
{\it J. Algebra,} {\bf 319}  (2008) 3280-3290.
MR2408318 (2009e:14004)
arXiv:math/0606279.

\bibitem{R}
M.A. Rieffel,
$C^*$-algebras associated with irrational rotations, 
{\it Pacific J. Math.} {\bf 93} (1981) 415--429. 
MR0623572 (83b:46087)


\bibitem{ps} 
S.P. Smith, 
 Computation of the Grothendieck and Picard groups of a complete toric DM stack $\cX$ by using a homogeneous coordinate ring for $\cX$, {\it Glasgow Math. Jour.,} {\bf 53} (2010) 97--113.
 MR2747137 (2012f:14101)
 
 \bibitem{SVdB} 
S.P. Smith and M. Van den Bergh,
 Noncommutative quadric surfaces, {\it J. Noncomm. Geom.,} {\bf 7} (2013) 817--856.
 MR3108697

 

\bibitem{zh} J. J. Zhang, {Non-Noetherian regular rings of dimension 2,}  {\it  Proc. Amer. Math. Soc.,} {\bf 126} (1998) 1645-1653.
MR1459158 (98g:16007)


 
\end{thebibliography}
\end{document}